\theoremstyle{remark}
\newtheorem{example}{Example}[section]
\newtheorem{remark}[example]{Remark}
\theoremstyle{definition}
\newtheorem{definition}[example]{Definition}
\theoremstyle{plain}
\newtheorem{proposition}[example]{Proposition}
\newtheorem{corollary}[example]{Corollary}
\newtheorem{conjecture}[example]{Conjecture}
\newtheorem{theorem}[example]{Theorem}
\newtheorem{lemma}[example]{Lemma}
\DeclareMathOperator{\Tors}{Tors}
\DeclareMathOperator{\Sstab}{Sstab}
\DeclareMathOperator{\Coh}{\mathcal{C}\mathit{\!o\!h}}
\DeclareMathOperator{\Bun}{\mathcal{B}\mathit{\!u\!n}}
\DeclareMathOperator{\Nil}{\mathcal{N}\mathit{\!\!il}}
\newcommand{\Fields}{\mathsf{Fields}}
\newcommand{\Sets}{\mathsf{Sets}}
\newcommand{\E}{\mathcal{E}}
\newcommand{\F}{\mathcal{F}}
\newcommand{\bbP}{\mathbb{P}}
\newcommand{\bbN}{\mathbb{N}}
\newcommand{\bbZ}{\mathbb{Z}}
\newcommand{\bbQ}{\mathbb{Q}}
\newcommand{\calM}{\mathcal{M}}
\newcommand{\calN}{\mathcal{N}}
\newcommand{\calC}{\mathcal{C}}
\newcommand{\calG}{\mathcal{G}}
\newcommand{\calO}{\mathcal{O}}
\newcommand{\calI}{\mathcal{I}}
\newcommand{\frakj}{\mathfrak{j}}
\newcommand{\frakm}{\mathfrak{m}}
\newcommand{\frakn}{\mathfrak{n}}
\newcommand{\fraka}{\mathfrak{a}}
\newcommand{\map}{\bullet \to \bullet}
\newcommand{\sub}{\bullet \hookrightarrow \bullet}
\newcommand{\filt}{\sub \hookrightarrow \bullet}
\newcommand{\subs}{\sub \hookleftarrow \bullet}
\DeclareMathOperator{\im}{im}
\DeclareMathOperator{\coker}{coker}
\DeclareMathOperator{\Spec}{Spec}
\DeclareMathOperator{\Pic}{Pic}
\DeclareMathOperator{\SB}{SB}
\DeclareMathOperator{\Mod}{Mod}
\DeclareMathOperator{\rank}{rank}
\DeclareMathOperator{\ed}{ed}
\DeclareMathOperator{\ind}{ind}
\DeclareMathOperator{\trdeg}{trdeg}
\DeclareMathOperator{\Aut}{Aut}
\DeclareMathOperator{\End}{End}
\DeclareMathOperator{\Hom}{Hom}
\DeclareMathOperator{\Ext}{Ext}
\DeclareMathOperator{\Mat}{Mat}
\DeclareMathOperator{\Res}{Res}
\DeclareMathOperator{\rmH}{H}
\newcommand{\Gm}{\mathbb{G}_{\mathrm{m}}}
\newcommand{\pr}{\mathrm{pr}}
\newcommand{\prsub}{\pr_{\mathrm{sub}}}
\newcommand{\prext}{\pr_{\mathrm{ext}}}
\newcommand{\prcap}{\pr_{\cap}}
\newcommand{\prto}{\pr_{\to}}
\newcommand{\id}{\mathrm{id}}
\begin{document}

\title[Essential dimension of coherent sheaves]{On the essential dimension of coherent sheaves}

\author[I. Biswas]{Indranil Biswas}
\address{School of Mathematics, Tata Institute of Fundamental Research, Homi Bhabha Road, Bombay 400005, India}
\email{indranil@math.tifr.res.in}

\author[A. Dhillon]{Ajneet Dhillon}
\address{Department of Mathematics, University of Western Ontario, London, Ontario N6A 5B7, Canada}
\email{adhill3@uwo.ca}

\author[N. Hoffmann]{Norbert Hoffmann}
\address{Department of Mathematics and Computer Studies, Mary Immaculate College, South Circular Road,
  Limerick, Ireland}
\email{norbert.hoffmann@mic.ul.ie}
\thanks{I.\ B. is supported by the J.\ C.\ Bose Fellowship. A.\ D. is partially supported by NSERC.
  N.\ H. was partially supported by SFB 647: Space - Time - Matter in Berlin.
  He thanks TIFR Bombay for hospitality, and Bernd Kreussler
  for a useful discussion on bundles over elliptic curves.}

\subjclass[2000]{14D23, 14D20}

\keywords{Essential dimension, moduli stack, endomorphism algebra, curve}

\begin{abstract}
  We characterize all fields of definition for a given coherent sheaf over a projective scheme in terms of
  projective modules over a finite-dimen\-sional endomorphism algebra. This yields general results on the
  essential dimension of such sheaves. Applying them to vector bundles over a smooth projective curve $C$, we
  obtain an upper bound for the essential dimension of their moduli stack. The upper bound is sharp if the
  conjecture of Colliot-Th\'el\`ene, Karpenko and Merkurjev holds. We find that the genericity property proved
  for Deligne-Mumford stacks by Brosnan, Reichstein and Vistoli still holds for this Artin stack,
  unless the curve $C$ is elliptic.
\end{abstract}

\maketitle

\section{Introduction}
The essential dimension of an algebraic object was introduced in \cite{br:97}. Roughly speaking,
it is the number of algebraically independent parameters needed to define the object; the
precise definition is recalled below. This notion has been studied intensively, leading to
many interesting connections with several areas of algebra and algebraic geometry, as the recent
surveys \cite{reichstein:11} and \cite{merkurjev:13} show.

The essential dimension of a moduli stack is the supremum of the essential dimensions of the
objects it parameterizes. For smooth Deligne-Mumford stacks, it suffices to consider generic objects,
according to the genericity theorem of Brosnan, Reichstein and Vistoli \cite[Theorem 6.1]{brv:11}. They use it
to determine the essential dimension of the moduli stack of curves. In an appendix to
\cite{brv:11}, Fakhruddin does likewise for the moduli stack of abelian varieties. The genericity theorem
is generalized to smooth Artin stacks with reductive automorphism groups in \cite{rv:11}.

The subject of this article is the essential dimension of coherent sheaves over a projective scheme.
We relate it to the essential dimension of projective modules over a finite-dimensional algebra, and study
the latter systematically. The essential dimension also involves the number of moduli. In order to count
moduli of coherent sheaves, we express those with nilpotent endomorphisms as iterated extensions.

We then apply our general results to the special case of vector bundles of fixed rank $r$ and degree $d$ over
a smooth projective curve $C$. Theorem \ref{thm:ed_vb} gives the essential dimension of the moduli
stack $\Bun_{C, r, d}$ in this case, modulo the now famous conjecture of Colliot-Th\'el\`ene, Karpenko and
Merkurjev \cite[\S 1]{ckm:07}. Our result improves the upper bounds on this essential dimension given in
\cite{dl:09} and in \cite{bdl:12}.

The stack $\Bun_{C, r, d}$ is not Deligne-Mumford, and its automorphism groups are in general not reductive.
We find that $\Bun_{C, r, d}$ nevertheless has the genericity property mentioned above, unless the curve $C$
is elliptic. In the case of an elliptic curve $C$, Proposition \ref{prop:g=1} gives counterexamples.
Our methods specifically address non-reductive automorphism groups, by focussing on nilpotent endomorphisms.

Let $X \hookrightarrow \bbP^N_k$ be a projective scheme over a base field $k$. Let $K$ be a field
containing $k$, and let $E$ be a coherent sheaf over the base change $X_K := X \otimes_k K$.
One says that $E$ is \emph{defined over a field $K'$} with $k \subseteq K' \subseteq K$ if there is a
coherent sheaf $E'$ over $X_{K'}$ with $E' \otimes_{K'} K \cong E$. The \emph{essential
dimension} of $E$ is
\begin{equation*}
  \ed_k( E) := \min_{K'} \trdeg_k K'
\end{equation*}
where the minimum is taken over all fields $K'$ with $k \subseteq K' \subseteq K$ such that $E$ is defined
over $K'$. 

Let $k( E) \subseteq K$ denote the \emph{field of moduli} for the coherent sheaf $E$ over $X_K$; cf.
Remark \ref{rem:moduli}. Since $k( E) \subseteq K'$ for every field of definition $K'$ for $E$, we have
\begin{equation*}
  \ed_k( E) = \trdeg_k k( E) + \ed_{k( E)}( E)
\end{equation*}
where $\ed_{k( E)}( E)$ refers to the scheme $X_{k( E)}$ over $k( E)$ instead of $X$ over $k$.

The essential dimension of $E$ over $k( E)$ measures how far $E$ is from being defined over $k( E)$.
This defect is caused by $\Aut( E)$, since an object without automorphisms is usually defined over its field
of moduli. We make use of the fact that $\Aut( E)$ is the group of units of the finite-dimensional $K$-algebra
$\End( E)$. Theorem \ref{thm:equivalence} describes the obstruction against defining $E$ over $k( E)$ in terms
of modules over such algebras. This is the basis of our results on the essential dimension of $E$ over $k(E)$.
We also deduce that every vector bundle over an elliptic curve is defined over its field of moduli.

We then have to estimate the transcendence degree of $k( E)$. This is more subtle if $E$ has nilpotent
endomorphism. Our estimates are based on Theorem \ref{thm:Nil_X}, which describes sheaves with nilpotent
endomorphisms as iterated extensions.

These two theorems are our main technical tools.
We formulate them for coherent sheaves over projective schemes, but the method generalizes
to other kinds of objects as long as they have finite-dimensional endomorphism algebras.

The structure of this paper is as follows. Section \ref{sec:modules} deals with projective modules over
right-artinian rings, in particular over finite-dimensional algebras. Section \ref{sec:ed+algs} studies
the essential dimension of such modules, and reduces this question to the case of central simple algebras,
which is well-studied.

Section \ref{sec:End} deals with endomorphism algebras of coherent sheaves.
Section \ref{sec:fields_of_def} relates the fields of definition for $E$ to those for some
module over an endomorphism algebra, and deduces information on the essential dimension of $E$ over $k( E)$.

Section \ref{sec:moduli} contains the moduli count, in particular for sheaves with nilpotent endomorphisms.
Section \ref{sec:bundles} puts all this together in the case of vector bundles over a curve,
and contains our results on their essential dimension.

\section{Projective Modules over Right-Artinian Rings} \label{sec:modules}
Let $R$ be a ring. Our rings are always associative, and they always have a unit, but they are
not necessarily commutative. By an $R$-module, we mean a right $R$-module, unless stated otherwise.
Let $\frakn \subset R$ be a nilpotent two-sided ideal.
\begin{lemma} \label{lem:lift}
  Every element $q \in R/\frakn$ with $q^2 = q$ admits a lift $p \in R$ with $p^2 = p$.
\end{lemma}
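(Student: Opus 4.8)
The plan is to lift $q$ by successive approximation, using the polynomial $f(x) = 3x^2 - 2x^3$ as a Newton-type iteration that halves the nilpotency defect at each step.

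First I would choose an arbitrary lift $a \in R$ of $q$; since $q^2 = q$ in $R/\frakn$, the element $n := a^2 - a$ lies in $\frakn$, and as $\frakn$ is nilpotent we have $\frakn^m = 0$ for some $m$. The crucial observation is that the whole argument takes place inside the commutative subring $\bbZ[a] \subseteq R$ generated by the single element $a$, so I may manipulate polynomials in $a$ freely, without any difficulty coming from $R$ being noncommutative.

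Next I would set $a' := 3a^2 - 2a^3 = f(a)$ and record two identities, both obtained by factoring polynomials in $\bbZ[x]$ and then substituting $x = a$. From $f(x) - x = -(2x-1)(x^2 - x)$ one gets $a' - a = -(2a-1)n \in \frakn$, so that $a'$ is again a lift of $q$. From the factorizations $f(x) = x^2(3 - 2x)$ and $1 - f(x) = (1-x)^2(2x+1)$, together with $x^2(1-x)^2 = (x^2 - x)^2$, one gets $a' - (a')^2 = f(a)\bigl(1 - f(a)\bigr) = (3 - 2a)(2a + 1)\, n^2$, so that $(a')^2 - a' \in \frakn^2$. Thus replacing $a$ by $a'$ leaves the class of $q$ in $R/\frakn$ unchanged while squaring the nilpotency defect.

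Iterating the map $a \mapsto f(a)$ a total of $k$ times then produces an element $p$ with $p \equiv q \pmod{\frakn}$ and $p^2 - p \in \frakn^{2^k}$; choosing $k$ with $2^k \geq m$ forces $p^2 - p = 0$, and $p$ is the desired idempotent lift. I expect the main point to check to be the two polynomial identities above, in particular that the defect lands in $\frakn^2$ rather than merely in $\frakn$, which is exactly what makes the iteration converge; everything else is formal.
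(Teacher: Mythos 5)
Your proof is correct and is essentially the paper's argument: both lift via the same cubic $f(x) = 3x^2 - 2x^3$ (the paper writes it as $-2p^3+3p^2$), exploiting the fact that this polynomial squares the idempotency defect $a^2-a$. The only difference is organizational --- the paper packages the repetition as an induction on the nilpotency exponent, reducing to the case $\frakn^2 = 0$ and then applying $f$ once, whereas you iterate $f$ explicitly and track that the defect lands in $\frakn^{2^k}$; your factorization identities and the remark that everything happens in the commutative subring $\bbZ[a]$ are a clean way to justify the same computation.
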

\begin{proof}
  By assumption, there is an integer $n \geq 1$ such that $\frakn^n = 0$. Using induction over $n$,
  we may assume $\frakn^2 = 0$ without loss of generality.

  Let $p \in R$ be any lift of $q$. Then $p^2 \equiv p$ modulo $\frakn$, and hence $(p^2 - p)^2 = 0$.
  Therefore, $p' := -2p^3 + 3p^2 \in R$ is another lift of $q$, and
  \begin{equation*}
    (p')^2 = 4p^6 - 12p^5 + 9p^4 = (p^2 - p)^2( 4p^2 - 4p - 3) - 2p^3 + 3p^2 = p'.\qedhere
  \end{equation*}
\end{proof}
\begin{corollary} \label{cor:lift}
  Let $N$ be a finitely generated projective $(R/\frakn)$-module.
  Then there is a finitely generated projective $R$-module $M$ such that $M/M \frakn \cong N$.
  The finitely generated projective $R$-module $M$ is unique up to isomorphisms.
\end{corollary}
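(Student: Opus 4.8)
The plan is to deduce both existence and uniqueness from the idempotent-lifting Lemma \ref{lem:lift}, applied not to $R$ itself but to matrix rings over $R$.

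For existence, I would begin by presenting $N$ as a direct summand of a free module: since $N$ is finitely generated projective, there is an integer $n$ and an idempotent $q \in \End_{R/\frakn}\bigl((R/\frakn)^n\bigr) = \Mat_n(R/\frakn)$ with $\im(q) \cong N$. The key observation is that $\Mat_n(\frakn)$ is a nilpotent two-sided ideal of $\Mat_n(R)$ --- indeed $\Mat_n(\frakn)^m \subseteq \Mat_n(\frakn^m)$ --- and that the quotient $\Mat_n(R)/\Mat_n(\frakn)$ is canonically $\Mat_n(R/\frakn)$. Applying Lemma \ref{lem:lift} to the ring $\Mat_n(R)$ and this ideal produces an idempotent $p \in \Mat_n(R) = \End_R(R^n)$ reducing to $q$. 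Setting $M := \im(p) = p R^n$, a direct summand of $R^n$ and hence finitely generated projective, I would then check that reduction modulo $\frakn$ (that is, the right-exact functor $- \otimes_R R/\frakn$, which sends $M$ to $M/M\frakn$ and respects the direct-sum decomposition $R^n = pR^n \oplus (1-p)R^n$) carries $M$ to $\im(q) \cong N$. This gives $M/M\frakn \cong N$.

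For uniqueness, suppose $M_1$ and $M_2$ are finitely generated projective $R$-modules equipped with an isomorphism $\phi : M_1/M_1\frakn \to M_2/M_2\frakn$. Since $M_1$ is projective and the reduction $M_2 \to M_2/M_2\frakn$ is surjective, I would lift the composite $M_1 \to M_1/M_1\frakn \xrightarrow{\phi} M_2/M_2\frakn$ to an $R$-linear map $f : M_1 \to M_2$, so that $f$ reduces to $\phi$ modulo $\frakn$.

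It then remains to show that any such $f$ is automatically an isomorphism, and here the nilpotence of $\frakn$ does all the work. Because $f$ is surjective modulo $\frakn$, its cokernel $C := \coker(f)$ satisfies $C = C\frakn$, whence $C = C\frakn^m = 0$ as $\frakn^m = 0$; thus $f$ is surjective. Since $M_2$ is projective, this surjection splits, giving $M_1 \cong M_2 \oplus \ker f$. Reducing this decomposition modulo $\frakn$ and using that $f$ (the corresponding projection) is an isomorphism modulo $\frakn$, I find $(\ker f)/(\ker f)\frakn = 0$, i.e. $\ker f = (\ker f)\frakn$; the same nilpotence argument then forces $\ker f = 0$, so $f$ is an isomorphism and $M_1 \cong M_2$. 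I expect the part requiring the most care to be this uniqueness half --- specifically the interplay between lifting $\phi$ via projectivity and then extracting genuine bijectivity from the purely nilpotent vanishing $C = C\frakn \Rightarrow C = 0$ --- rather than existence, which is a direct translation of Lemma \ref{lem:lift} to matrix rings.
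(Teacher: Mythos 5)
Your proof is correct. The existence half is exactly the paper's argument: present $N$ as the image of an idempotent $q \in \Mat_{n \times n}(R/\frakn)$, lift $q$ to an idempotent $p \in \Mat_{n \times n}(R)$ via Lemma \ref{lem:lift}, and take $M := pR^n$; you even make explicit the point the paper leaves tacit, namely that the lemma is being applied to the matrix ring $\Mat_{n \times n}(R)$ with its nilpotent ideal $\Mat_{n \times n}(\frakn)$. The uniqueness half is where you genuinely diverge. The paper lifts \emph{both} mutually inverse maps $g_1 : M/M\frakn \to M'/M'\frakn$ and $g_2 : M'/M'\frakn \to M/M\frakn$ to $R$-linear maps $f_1, f_2$ (using that $M$ and $M'$ are summands of free modules), and then observes that $f_1 \circ f_2$ and $f_2 \circ f_1$ are congruent to the identity modulo $\frakn$, hence are automorphisms --- the point being that an endomorphism of the form $\id + h$ with $h$ carrying the module into its product with $\frakn$ is invertible, since such an $h$ is nilpotent. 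You instead lift only one direction and recover bijectivity of $f$ by hand: surjectivity from $\coker(f) = \coker(f)\frakn$ and nilpotence of $\frakn$, then a splitting $M_1 \cong M_2 \oplus \ker f$ from projectivity of $M_2$, then $\ker f = (\ker f)\frakn = 0$. Both arguments run on the same nilpotent-Nakayama mechanism; the paper's two-sided lifting packages it into a single ``unit modulo a nilpotent ideal'' step, while yours trades the second lift for the splitting trick. Your version needs only a one-directional map as input but requires the care you correctly exercise: splitting \emph{before} reducing modulo $\frakn$, so that the image of $\ker f$ in $M_1/M_1\frakn$ is legitimately identified with $(\ker f)/(\ker f)\frakn$.
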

\begin{proof}
  By assumption, $N$ is isomorphic to a direct summand of a free module $(R/\frakn)^r$ for some $r \in \bbN$.
  Therefore, $N$ is isomorphic to the image of a matrix
  \begin{equation*}
    q \in \Mat_{r \times r}( R/\frakn)
  \end{equation*}
  with $q^2 = q$. Using Lemma \ref{lem:lift}, we can lift $q$ to a matrix
  \begin{equation*}
    p \in \Mat_{r \times r}( R)
  \end{equation*}
  with $p^2 = p$. The image of $p$ is a finitely generated projective $R$-module $M$ with $M/M \frakn \cong N$.
  For the uniqueness, suppose that $M'$ is another finitely generated projective $R$-module with
  $M'/M'\frakn \cong N$. Then there are $(R/\frakn)$-linear maps
  \begin{equation*}
    g_1: M/M \frakn \longrightarrow M'/M'\frakn \qquad\text{and}\qquad
    g_2: M'/M'\frakn \longrightarrow M/M \frakn
  \end{equation*}
  with $g_1 \circ g_2 = \id$ and $g_2 \circ g_1 = \id$.
  Since $M$ and $M'$ are direct summands of free modules, we can lift $g_1$ and $g_2$ to $R$-linear maps
  \begin{equation*}
    f_1: M \longrightarrow M' \qquad\qquad\text{and}\qquad\qquad f_2: M' \longrightarrow M.
  \end{equation*}
  They satisfy $f_1 \circ f_2 \equiv \id$ and $f_2 \circ f_1 \equiv \id$ modulo $\frakn$.
  Therefore, $f_1 \circ f_2$ and $f_2 \circ f_1$ are automorphisms. This shows that $M'$ is isomorphic to $M$.
\end{proof}
We will only need rings that are finite-dimensional algebras over a field.
These satisfy the descending chain condition for right ideals, so they are right-artinian.
\begin{definition}
  A projective module $M$ over a right-artinian ring $R$ has \emph{rank} $r \in \bbQ_{> 0}$ if
  the direct sum $M^{\oplus n}$ is free of rank $nr$ for some $n \in \bbN$ with $nr \in \bbN$.
\end{definition}
Let $R$ be a right-artinian ring. Let $\frakj \subset R$ denote the Jacobson radical; this is
the smallest two-sided ideal such that $R/\frakj$ is semisimple. The ideal $\frakj \subseteq R$
is known to be nilpotent (see for example Theorem 14.2 in \cite{isaacs:93}). Wedderburn's theorem states
\begin{equation*}
  R/\frakj \cong \Mat_{n_1 \times n_1}( D_1) \times \cdots \times \Mat_{n_s \times n_s}( D_s)
\end{equation*}
for some division rings $D_1, \ldots, D_s$ and some integers $n_1, \ldots, n_s \geq 1$. We put
\begin{equation*}
  d_R := \gcd( n_1, \ldots, n_s).
\end{equation*}
Corollary \ref{cor:lift} states that there is a finitely generated projective $R$-module $M_R$ with
\begin{equation*}
  M_R/M_R \frakj \cong \Mat_{(n_1/d_R) \times n_1}( D_1) \times \cdots \times \Mat_{(n_s/d_R) \times n_s}( D_s),
\end{equation*}
and that $M_R$ is unique up to isomorphisms. Clearly $M_R$ is projective of rank $1/d_R$. 
\begin{proposition} \label{prop:M_R}
  Let $R$, $d_R$ and $M_R$ be as above. If $M$ is a projective $R$-module of some rank $r \in \bbQ_{> 0}$,
  then $r = n/d_R$ and $M \cong M_R^{\oplus n}$ for some integer $n \geq 1$.
\end{proposition}
\begin{proof}
  Suppose that $M$ is a projective module of rank $r$ over $R$. Then $M/M \frakj$ is a projective module
  of rank $r$ over $R/\frakj$. Hence we conclude
  \begin{equation*}
    M/M \frakj \cong \Mat_{n_1 r \times n_1}( D_1) \times \cdots \times \Mat_{n_s r \times n_s}( D_s)
  \end{equation*}
  with $n_1 r, \ldots, n_s r \in \bbN$. In other words,
  $r = n/d_R$ for some integer $n \geq 1$, and $M/M \frakj \cong (M_R/M_R \frakj)^{\oplus n}$.
  This implies $M \cong M_R^{\oplus n}$ due to Corollary \ref{cor:lift}.
\end{proof}

\section{Essential Dimension and Finite-Dimensional Algebras} \label{sec:ed+algs}
Let $k$ be a field. Let $\Fields/k$ denote the category of fields $K \supseteq k$. Let a functor
\begin{equation*}
  F: \Fields/k \longrightarrow \Sets
\end{equation*}
be given. If an element $a \in F( K)$ is the image of an element $a' \in F( K')$ for some intermediate field
$k \subseteq K' \subseteq K$, then $a$ is said to be \emph{defined over $K'$}.
\begin{definition}[Merkurjev] ${}_{}$
  \begin{itemize}
   \item[i)] The \emph{essential dimension of an element $a \in F( K)$} is
    \begin{equation*}
      \ed_k( a) := \inf_{K'} \trdeg_k K'
    \end{equation*}
    where the infimum is over all fields $k \subseteq K' \subseteq K$ over which $a$ is defined. 
   \item[ii)] The \emph{essential dimension of the functor $F$} is
    \begin{equation*}
      \ed_k( F) := \sup_a \ed_k( a)
    \end{equation*}
    where the supremum is over all fields $K \supseteq k$ and all elements $a \in F( K)$. 
    We put $\ed_k( F) = -\infty$ if $F( K) = \emptyset$ for all $K$.
   \item[iii)] The \emph{essential dimension of a stack $\calM$ over $k$}
    is the essential dimension of the functor $\Fields/k \longrightarrow \Sets$
    that sends each field $K \supseteq k$ to the set of isomorphism classes in the groupoid $\calM( K)$.
  \end{itemize}
\end{definition}
Given a finite-dimensional $k$-algebra $A$ and a number $r \in \bbQ_{> 0}$, we denote by
\begin{equation*}
  \Mod_{A, r}: \Fields_k \longrightarrow \Sets
\end{equation*}
the functor that sends each field $K \supseteq k$ to the set $\Mod_{A, r}( K)$ of isomorphism classes of
projective $(A \otimes_k K)$-modules of rank $r$. Each of these sets $\Mod_{A, r}( K)$ has at most one
element by Proposition \ref{prop:M_R}.

This section deals with
\begin{equation*}
  \ed_k( \Mod_{A, r}).
\end{equation*}
The following three propositions will allow us to assume that $A$ is semisimple, simple,
and a division algebra, respectively.
\begin{proposition} \label{prop:reduce}
  If $\frakn \subset A$ is a nilpotent two-sided ideal, then
  \begin{equation*}
    \ed_k( \Mod_{A, r}) = \ed_k( \Mod_{A/\frakn, r}).
  \end{equation*}
\end{proposition}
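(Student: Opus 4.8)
The plan is to show that the functors $\Mod_{A,r}$ and $\Mod_{A/\frakn, r}$ are isomorphic, and then invoke the fact that isomorphic functors have the same essential dimension. The desired isomorphism should come from reducing modulo $\frakn$ fieldwise, with Corollary \ref{cor:lift} as the engine.

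First I would fix a field $K \supseteq k$ and set $A_K := A \otimes_k K$ and $\frakn_K := \frakn \otimes_k K$. Since $K$ is flat over $k$, the ideal $\frakn_K$ is a two-sided ideal of $A_K$ with quotient $A_K/\frakn_K \cong (A/\frakn) \otimes_k K$, and it is nilpotent with the same nilpotency index as $\frakn$, because $\frakn_K^n = (\frakn^n)_K$. Applying Corollary \ref{cor:lift} to $R = A_K$ and the nilpotent ideal $\frakn_K$, the assignment $M \mapsto M/M\frakn_K$ gives a bijection between isomorphism classes of finitely generated projective $A_K$-modules and isomorphism classes of finitely generated projective $(A/\frakn) \otimes_k K$-modules, with inverse the lifting construction of that corollary. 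Note that every module of rank $r$ is a direct summand of a finitely generated free module, hence finitely generated, so this bijection applies to the modules at hand.

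The key step is to check that this bijection preserves rank, so that it restricts to a bijection $\Mod_{A,r}(K) \xrightarrow{\sim} \Mod_{A/\frakn, r}(K)$. If $M^{\oplus n} \cong A_K^{\,nr}$ is free, then reducing modulo $\frakn_K$ gives $(M/M\frakn_K)^{\oplus n} \cong (A_K/\frakn_K)^{\,nr}$, so $M/M\frakn_K$ has rank $r$. Conversely, if $N$ has rank $r$ with lift $M$, then by the uniqueness clause of Corollary \ref{cor:lift} the module $M^{\oplus n}$ is the lift of $N^{\oplus n}$ and $A_K^{\,nr}$ is the lift of $(A_K/\frakn_K)^{\,nr}$; matching the lifts of the isomorphic reductions $N^{\oplus n} \cong (A_K/\frakn_K)^{\,nr}$ forces $M^{\oplus n} \cong A_K^{\,nr}$, so $M$ has rank $r$. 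I expect this rank bookkeeping to be the main obstacle, since it is where the uniqueness part of Corollary \ref{cor:lift} does the real work.

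Finally, naturality in $K$ comes essentially for free: by Proposition \ref{prop:unique} every set $\Mod_{A/\frakn, r}(K)$ has at most one element, so there is at most one map of sets into it and every naturality square commutes automatically. Thus $\Mod_{A,r}$ and $\Mod_{A/\frakn, r}$ are isomorphic as functors $\Fields/k \to \Sets$. Since an element $a \in F(K)$ is defined over an intermediate field $K'$ if and only if its image under a functor isomorphism is, the essential dimensions of isomorphic functors coincide, and therefore $\ed_k(\Mod_{A,r}) = \ed_k(\Mod_{A/\frakn, r})$.
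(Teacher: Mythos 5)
Your proof is correct and follows the same route as the paper: the paper's entire proof is the observation that Corollary \ref{cor:lift} makes the canonical reduction map $\Mod_{A,r}(K) \to \Mod_{A/\frakn,r}(K)$ bijective for every field $K \supseteq k$. You simply spell out the details the paper leaves implicit --- that $\frakn \otimes_k K$ is again nilpotent, that the bijection preserves rank (via the uniqueness clause), and that naturality is automatic because the sets are singletons or empty --- all of which is sound.
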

\begin{proof}
  Corollary \ref{cor:lift} states that the canonical map
  \begin{equation*}
    \Mod_{A, r}( K) \longrightarrow \Mod_{A/\frakn, r}( K)
  \end{equation*}
  is bijective for every field $K \supseteq k$.
\end{proof}
\begin{proposition} \label{prop:product}
  If $A$ is isomorphic to a product of $k$-algebras $A_i$, then
  \begin{equation*}
    \ed_k( \Mod_{A, r}) \leq \sum_i \ed_k( \Mod_{A_i, r}).
  \end{equation*}
\end{proposition}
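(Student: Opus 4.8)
The plan is to identify $\Mod_{A,r}$ with a product of functors and then invoke a general subadditivity property of essential dimension under products.

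First I would record the functorial decomposition. Writing $A \cong \prod_i A_i$, base change along any field extension $K \supseteq k$ gives $A \otimes_k K \cong \prod_i( A_i \otimes_k K)$. As already used in the proof of Proposition \ref{prop:unique}, a module over a finite product of rings decomposes canonically as the product of its components, and it is finitely generated projective of rank $r$ precisely when each of its components is. Sending each module to the tuple of its components therefore defines a bijection $\Mod_{A, r}( K) \cong \prod_i \Mod_{A_i, r}( K)$, natural in $K$. Since essential dimension is invariant under isomorphism of functors, it suffices to bound the essential dimension of the product functor $\prod_i \Mod_{A_i, r}$ by the sum of the essential dimensions of the factors.

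Next I would prove the subadditivity. Fix a field $K \supseteq k$ and an element $( a_i)_i \in \prod_i \Mod_{A_i, r}( K)$. For each $i$, choose an intermediate field $k \subseteq K_i \subseteq K$ over which $a_i$ is defined, and let $L \subseteq K$ be the compositum of all the $K_i$. Each $a_i$, being defined over $K_i \subseteq L$, is defined over $L$; hence the whole tuple $( a_i)_i$ is defined over $L$. The subadditivity of transcendence degree for composita gives $\trdeg_k L \leq \sum_i \trdeg_k K_i$. Combining $\ed_k(( a_i)_i) \leq \trdeg_k L$ with this estimate and taking the infimum over the independent choices of the $K_i$ yields $\ed_k(( a_i)_i) \leq \sum_i \ed_k( a_i)$. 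Finally, each component $a_i$ is a particular element of $\Mod_{A_i, r}$ over some field, so $\ed_k( a_i) \leq \ed_k( \Mod_{A_i, r})$; taking the supremum over all $K$ and all tuples $( a_i)_i$ then gives the asserted inequality. If some factor functor is everywhere empty, the product functor is too, and the inequality holds trivially by the convention $\ed_k = -\infty$.

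The only genuinely non-formal ingredient is the transcendence-degree estimate $\trdeg_k L \leq \sum_i \trdeg_k K_i$, which follows by induction from the two-factor case $\trdeg_k( K_1 K_2) \leq \trdeg_k K_1 + \trdeg_{K_1}( K_1 K_2) \leq \trdeg_k K_1 + \trdeg_k K_2$, using that a transcendence basis of $K_2$ over $k$ generates $K_1 K_2$ algebraically over $K_1$. Everything else is bookkeeping with fields of definition and the infimum/supremum defining essential dimension, so I expect no serious obstacle beyond keeping the compositum inside the ambient field $K$ and being careful that the bound on each element passes correctly to the supremum.
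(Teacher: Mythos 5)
Your proof is correct and follows essentially the same route as the paper's: identify $\Mod_{A,r}(K)$ with $\prod_i \Mod_{A_i,r}(K)$ componentwise, define each component over an intermediate field, and pass to the compositum, bounding its transcendence degree by the sum. The paper leaves the transcendence-degree subadditivity and the supremum bookkeeping implicit, whereas you spell them out, but there is no difference in substance.
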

\begin{proof}
  For each field $K \supseteq k$, we have a canonical bijection
  \begin{equation*}
    \prod_i \Mod_{A_i, r}( K) \longrightarrow \Mod_{A, r}( K)
  \end{equation*}
  which sends each sequence of projective $(A_i \otimes_k K)$-modules $M_i$ to their product $M$.
  If each $M_i$ is defined over some intermediate field $k \subseteq K_i' \subseteq K$, then $M$ is defined
  over the compositum $K' \subseteq K$ of all $K_i'$. This shows $\ed_k( M) \leq \sum_i \ed_k( M_i)$.
\end{proof}
\begin{proposition} \label{prop:morita}
  If $A \cong \Mat_{n \times n}( B)$ for a $k$-algebra $B$, then 
  \begin{equation*}
    \ed_k( \Mod_{A, r}) = \ed_k( \Mod_{B, nr}).
  \end{equation*}
\end{proposition}
\begin{proof}
  For each field $K \supseteq k$, we have a canonical bijection
  \begin{equation*}
    \Mod_{A, r}( K) \longrightarrow \Mod_{B, nr}( K)
  \end{equation*}
  which sends a projective $(A \otimes_k K)$-module $M$ to $M \otimes_A \Mat_{n \times 1}( B)$.
  The inverse map sends a projective $(B \otimes_k K)$-module $N$ to $N \otimes_B \Mat_{1 \times n}( B)$.
\end{proof}
\begin{proposition} \label{prop:1/d}
  If $r = n/d$ for coprime integers $n, d \geq 1$, then
  \begin{equation*}
    \ed_k( \Mod_{A, r}) = \ed_k( \Mod_{A, 1/d}).
  \end{equation*}
\end{proposition}
\begin{proof}
  For each field $K \supseteq k$, we have a canonical map
  \begin{equation*}
    \Mod_{A, 1/d}( K) \longrightarrow \Mod_{A, r}( K)
  \end{equation*}
  which sends a module $M$ to $M^{\oplus n}$. This map is bijective due to Proposition \ref{prop:M_R}.
\end{proof}
Let $A$ be a simple $k$-algebra, with center $l \supseteq k$. Recall that the \emph{degree} of $A$ is
\begin{equation*}
  \deg A := \sqrt{\dim_l A} \in \bbN.
\end{equation*}
If $A \cong \Mat_{n \times n}( D)$ for a central division algebra $D$ over $l$, then the \emph{index} of $A$ is
\begin{equation*}
  \ind A := \deg D = \deg A/n.
\end{equation*}

In the case of a central simple $k$-algebra $A$, an upper bound for $\ed_k( \Mod_{A, 1/\deg A})$ is proved
in \cite[\S1]{ckm:07}. We follow the argument and generalize it.
\begin{proposition} \label{prop:bound}
  If $A$ is a simple $k$-algebra, and $0 < r < 1$, then
  \begin{equation*}
    \ed_k( \Mod_{A, r}) \leq r(1-r) \dim_k A.
  \end{equation*}
  If moreover $r \deg A \not\in \bbN$, then $\ed_k( \Mod_{A, r}) = - \infty$.
\end{proposition}
\begin{proof}
  Let $l \supseteq k$ denote the center of $A$. Then $A$ is a central simple $l$-algebra. Let
  \begin{equation*}
    \SB( r, A)
  \end{equation*}
  denote the generalized Severi-Brauer variety over $l$ that parameterizes right ideals $\fraka \subset A$
  which are projective of rank $r$ over $A$. This variety is a form of the Grassmannian that parameterizes
  linear subspaces of dimension $r \deg A$ in a vector space of dimension $\deg A$. Therefore,
  $\SB( r, A) = \emptyset$ if $r \deg A \not\in \bbN$, and otherwise
  \begin{equation*}
    \dim_l \SB( r, A) = r(1-r) \dim_l A.
  \end{equation*}

  We denote the Weil restriction of the variety $\SB( r, A)$ from $l$ to $k$ by
  \begin{equation*}
    \Res_{l/k} \SB( r, A).
  \end{equation*}
  This $k$-scheme represents by definition the functor that sends each $k$-scheme $S$ to the set of
  $l$-morphisms from $S \otimes_k l$ to $\SB( r, A)$. This functor is indeed representable,
  for example by Theorem 7.6/4 in \cite{blr:90}. We have
  \begin{equation*}
    \dim_k \Res_{l/k} \SB( r, A) = [l:k] \cdot \dim_l \SB( r, A).
  \end{equation*}
 
  Now let $K \supseteq k$ be a field such that
  \begin{equation*}
    \Mod_{A, r}( K) \neq \emptyset.
  \end{equation*}
  Due to Proposition \ref{prop:M_R}, there then is a right ideal $\fraka \subset A \otimes_k K$ which is
  projective of rank $r$ over $A$. The ideal $\fraka$ corresponds to a $K$-valued point in $\Res_{l/k}
  \SB( r, A)$. Let $K' \subseteq K$ be the residue field of that point in $\Res_{l/k} \SB( r, A)$. Then
  \begin{equation*}
    \Mod_{A, r}( K') \neq \emptyset
  \end{equation*}
  because the ideal $\fraka$ is already defined over $K'$, and
  \begin{equation*}
    \trdeg_k( K') \leq \dim_k \Res_{l/k} \SB( r, A) = r( 1-r) \dim_k A. \qedhere
  \end{equation*}
\end{proof} 

\begin{corollary} \label{cor:ed_bound}
  If $A \cong \Mat_{n \times n}( B)$ for a simple $k$-algebra $B$, then
  \begin{equation*}
    \ed_k( \Mod_{A, r}) < n r \dim_k B.
  \end{equation*}
\end{corollary}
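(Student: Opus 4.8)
The plan is to reduce to the already-established Propositions \ref{prop:A_l}, \ref{prop:morita} and \ref{prop:bound}. Write $l$ for the center of $B$; since $B$ is a finite-dimensional simple $k$-algebra, $l$ is a finite field extension of $k$ and $B$ is central simple over $l$. The center of $A \cong \Mat_{n \times n}(B)$ coincides with $l$, so $A$ is central simple over $l$ as well. First I would invoke Proposition \ref{prop:A_l}, applied to the field $l \supseteq k$ lying in the center of $A$, to obtain
\begin{equation*}
  \ed_k( \Mod_{{}_kA, r}) \leq [l:k] \cdot \ed_l( \Mod_{{}_lA, r}).
\end{equation*}
Next I would apply Proposition \ref{prop:morita} over the base field $l$: since $A \cong \Mat_{n \times n}(B)$ with $B$ central simple over $l$, this gives $\ed_l( \Mod_{{}_lA, r}) = \ed_l( \Mod_{{}_lB, 1/d})$, where $d \geq 1$ is the denominator of $nr$.

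It then remains to bound $\ed_l( \Mod_{{}_lB, 1/d})$. If $d = 1$, then every projective module of rank $1/d = 1$ over $B \otimes_l K$ is isomorphic to the free module (Proposition \ref{prop:unique}), hence defined over $l$, so $\ed_l( \Mod_{{}_lB, 1}) = 0$ and the desired bound reads $\ed_k( \Mod_{A, r}) \leq [l:k] \cdot 0 = 0 < nr \dim_k B$, the last inequality holding because $nr > 0$ and $\dim_k B > 0$. If instead $d \geq 2$, then $0 < 1/d < 1$, and Proposition \ref{prop:bound} yields
\begin{equation*}
  \ed_l( \Mod_{{}_lB, 1/d}) \leq \tfrac{1}{d}\bigl( 1 - \tfrac{1}{d}\bigr) \dim_l B.
\end{equation*}

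Combining these with $\dim_k B = [l:k] \dim_l B$, I would obtain in the case $d \geq 2$ the chain
\begin{equation*}
  \ed_k( \Mod_{A, r}) \leq \tfrac{1}{d}\bigl( 1 - \tfrac{1}{d}\bigr) \dim_k B < \tfrac{1}{d} \dim_k B \leq nr \dim_k B,
\end{equation*}
where the strict middle inequality uses $1 - 1/d < 1$ together with $\dim_k B > 0$, and the final inequality uses that $nr$ is a positive rational whose reduced denominator is $d$, so that $nr \geq 1/d$. The one point requiring care is the strictness of the conclusion: it is precisely the factor $1 - 1/d < 1$ in Proposition \ref{prop:bound} (the $r(1-r)$ rather than $r$) that upgrades the estimate to a strict inequality, so I would be sure to treat the boundary case $d = 1$, where Proposition \ref{prop:bound} does not apply, separately as above.
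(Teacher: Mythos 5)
Your proof is correct and follows essentially the same route as the paper's: pass to the center $l$ of $B$, apply Proposition \ref{prop:morita} and Proposition \ref{prop:bound} over $l$, and descend to $k$ via Proposition \ref{prop:A_l} (you merely apply Proposition \ref{prop:A_l} first rather than last, which is immaterial). The one genuine difference is that you separately treat the boundary case $d = 1$, where Proposition \ref{prop:bound} does not literally apply since its hypothesis is $0 < r < 1$; the paper's proof glosses over this case, so your extra care closes a small gap rather than introducing one.
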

\begin{proof}
  Proposition \ref{prop:morita}, Proposition \ref{prop:1/d} and Proposition \ref{prop:bound} yield
  \begin{equation*}
    \ed_k( \Mod_{A, r}) = \ed_k( \Mod_{B, nr}) = \ed_k( \Mod_{B, 1/d})
      \leq \frac{1}{d}( 1 - \frac{1}{d}) \dim_k B\, ,
  \end{equation*}
  where $d \in \bbN$ is the denominator of $n r \in \bbQ_{> 0}$. Since $1/d \leq nr$, we get
  \begin{equation*}
    \ed_k( \Mod_{A, r}) < n r \dim_k B. \qedhere
  \end{equation*}
\end{proof}

Given a prime number $p$ and an integer $n \geq 1$, we denote by $v_p( n)$ the $p$-adic valuation of $n$.
Therefore, $p^{v_p( n)}$ is the largest power of $p$ that divides $n$.

\begin{corollary} \label{cor:ed_D}
  If $D$ is a division algebra over $k$, and $d$ divides $\deg D$, then
  \begin{equation*}
    \ed_k( \Mod_{D, 1/d}) \leq [l:k] \sum_{p|\deg D} p^{2v_p( \deg D/d)}( p^{v_p( d)} - 1)
  \end{equation*}
  where $l \supseteq k$ denotes the center of $D$.
\end{corollary}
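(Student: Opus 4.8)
The plan is to reduce to the primary components of $D$ and then apply Proposition \ref{prop:bound} one prime at a time. Write $\deg D = \prod_p p^{a_p}$ and $d = \prod_p p^{b_p}$, so that $0 \le b_p \le a_p$ because $d$ divides $\deg D$, and $a_p - b_p = v_p(\deg D/d)$. Let $D \cong \bigotimes_p D_p$ be the primary decomposition into central division $k$-algebras $D_p$ with $\deg D_p = p^{a_p}$; this is standard Brauer-group theory. I claim that the functors $\Mod_{D, 1/d}$ and $\prod_p \Mod_{D_p, 1/p^{b_p}}$ are naturally isomorphic, whence $\ed_k(\Mod_{D, 1/d}) \le \sum_p \ed_k(\Mod_{D_p, 1/p^{b_p}})$ by the argument of Proposition \ref{prop:product} (that argument uses only the product structure of the functor, not the equality of the ranks).

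To prove the claim, fix a field $K \supseteq k$. By Proposition \ref{prop:unique} each of the sets $\Mod_{D, 1/d}(K)$ and $\Mod_{D_p, 1/p^{b_p}}(K)$ has at most one element, so it suffices to match their loci of nonemptiness. Writing $c_p := v_p(\ind(D_p \otimes_k K))$, Example \ref{ex:simple} gives that $\Mod_{D_p, 1/p^{b_p}}(K) \neq \emptyset$ exactly when $p^{b_p}$ divides the matrix size $p^{a_p}/\ind(D_p \otimes_k K) = p^{a_p - c_p}$, that is, when $c_p \le a_p - b_p$. On the other hand, since the algebras $D_p \otimes_k K$ have pairwise coprime degrees, the index is multiplicative, so $\ind(D \otimes_k K) = \prod_p p^{c_p}$ and the matrix size of $D \otimes_k K$ is $\prod_p p^{a_p - c_p}$. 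Hence Example \ref{ex:simple} gives that $\Mod_{D, 1/d}(K) \neq \emptyset$ exactly when $d$ divides this number, i.e. when $c_p \le a_p - b_p$ for every $p$. Both functors are therefore nonempty on precisely the same fields; since all the sets involved are at most singletons, the unique bijections assemble into a natural isomorphism automatically.

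It remains to bound each $\ed_k(\Mod_{D_p, 1/p^{b_p}})$. If $b_p = 0$, the rank is $1$, the free module realizes it and is defined over $k$, so this term is $0$, matching $p^{2(a_p - b_p)}(p^{b_p} - 1) = 0$. If $b_p \ge 1$, then $0 < 1/p^{b_p} < 1$, and Proposition \ref{prop:bound} applied to the central simple algebra $D_p$ gives $\ed_k(\Mod_{D_p, 1/p^{b_p}}) \le \tfrac{1}{p^{b_p}}(1 - \tfrac{1}{p^{b_p}}) \dim_k D_p$; since $\dim_k D_p = p^{2a_p}$, the right-hand side equals $p^{2(a_p - b_p)}(p^{b_p} - 1)$. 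Summing over all $p \mid \deg D$ then yields the asserted bound.

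The only genuinely nontrivial ingredient is the bookkeeping of indices: that $D$ splits as a tensor product of its primary components and that the index is multiplicative across coprime degrees, so that the $p$-adic valuation of $\ind(D \otimes_k K)$ is controlled by $D_p$ alone. Everything else is a direct substitution into Proposition \ref{prop:bound}, together with the observation that for functors valued in at-most-singletons the essential dimension depends only on the locus of nonemptiness, which is what makes the passage to the primary components lossless.
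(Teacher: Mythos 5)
Your proof is correct and takes essentially the same route as the paper: decompose $D$ into its primary components $D_p$, identify $\Mod_{D,1/d}$ with the product $\prod_{p} \Mod_{D_p,1/p^{v_p(d)}}$ so that essential dimensions add, and then apply Proposition \ref{prop:bound} one prime at a time. You merely make explicit two points the paper's proof leaves implicit, namely the index multiplicativity across coprime degrees that justifies the product decomposition via Example \ref{ex:simple}, and the trivial case $p \nmid d$, where the rank is $1$ and Proposition \ref{prop:bound} does not literally apply.
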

\begin{proof}
  There are central division algebras $D_p$ over $l$ such that
  \begin{equation*}
    D \cong \bigotimes_{p|\deg D} D_p
  \end{equation*}
  over $l$, and $\deg D_p = p^{v_p( \deg D)}$. We put $d_p := p^{v_p( d)}$.

  Let a field $K \supseteq k$ be given. Let $\frakm_i$ denote the maximal ideals in $l \otimes_k K$, and put
  \begin{equation*}
    L_i = (l \otimes_k K)/\frakm_i.
  \end{equation*}
  Since $D \otimes_l L_i$ and $D_p \otimes_l L_i$ are central simple $L_i$-algebras, they are simple as
  $K$-algebras. They are precisely the simple quotients of $D \otimes_k K$ and of $D_p \otimes_k K$,
  respectively. Using Proposition \ref{prop:M_R}, we conclude that the canonical map
  \begin{equation*}
    \prod_{p|\deg D} \Mod_{D_p, 1/d_p}( K) \longrightarrow \Mod_{D, 1/d}( K)
  \end{equation*}
  is bijective for every field $K \supseteq k$. This proves the inequality
  \begin{equation*}
    \ed_k( \Mod_{D, 1/d}) \leq \sum_{p|\deg D} \ed_k( \Mod_{D_p, 1/d_p}).
  \end{equation*}
  Using Proposition \ref{prop:bound} to bound each summand from above, the result follows.
\end{proof}

Karpenko \cite[Theorem 4.3]{karpenko:09} has proved that this bound is sharp if $D$ has center $k$,
and $\deg D$ is a prime power:
\begin{theorem}[Karpenko] ${}_{}$\\\indent
  If $D$ is a central division algebra over $k$ with $\deg D = p^n$, and $1 \leq m \leq n$, then
  \begin{equation*}
    \ed_k( \Mod_{D, 1/p^m}) = p^{2(n-m)}( p^m - 1).
  \end{equation*}
\end{theorem}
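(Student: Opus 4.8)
The plan is to identify $\ed_k(\Mod_{D,1/p^m})$ with the canonical dimension of a generalized Severi--Brauer variety, read off the upper bound from the estimate already in hand, and obtain the matching lower bound from Karpenko's incompressibility theorem. For the upper bound, note that $1 \leq m \leq n$ forces $0 < r < 1$ for $r := 1/p^m$, so Proposition \ref{prop:bound} applies with $A = D$ and gives $\ed_k(\Mod_{D,1/p^m}) \leq r(1-r)\dim_k D$. Since $\dim_k D = (\deg D)^2 = p^{2n}$, the right-hand side equals $p^{2(n-m)}(p^m-1)$; equivalently, this is the special case $\deg D = p^n$, $d = p^m$ of Corollary \ref{cor:ed_D}. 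Thus only the reverse inequality requires genuine work.

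The next step is to recognize $\Mod_{D,1/p^m}$ as a detection functor. By Example \ref{ex:simple} together with Proposition \ref{prop:unique}, the set $\Mod_{D,1/p^m}(K)$ has exactly one element when $D \otimes_k K$ admits a projective module of rank $1/p^m$, and is empty otherwise; moreover such a module exists precisely when $D \otimes_k K$ contains a right ideal that is projective of rank $1/p^m$, i.e.\ exactly when the generalized Severi--Brauer variety $X := \SB(1/p^m, D)$ has a $K$-rational point. Hence $\Mod_{D,1/p^m}$ agrees with the functor sending $K$ to a one-point set if $X(K) \neq \emptyset$ and to $\emptyset$ otherwise, whose essential dimension is by definition the canonical dimension $\mathrm{cdim}(X)$. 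Therefore $\ed_k(\Mod_{D,1/p^m}) = \mathrm{cdim}(X)$, and the dimension count in the proof of Proposition \ref{prop:bound} identifies $\dim X = r\deg D(\deg D - r \deg D) = p^{2(n-m)}(p^m-1)$, which already re-proves the upper bound in the form $\mathrm{cdim}(X) \leq \dim X$.

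For the lower bound it then suffices to show $\mathrm{cdim}(X) \geq \dim X$. I would pass to the canonical $p$-dimension, using $\mathrm{cdim}(X) \geq \mathrm{cdim}_p(X)$ (the $p$-local version allows prime-to-$p$ extensions and so can only decrease), and invoke Karpenko's theorem \cite{karpenko:00, karpenko:09} that the generalized Severi--Brauer varieties of a central division algebra of prime-power degree are $p$-incompressible, that is $\mathrm{cdim}_p(X) = \dim X$. Combined with the upper bound from the previous paragraph, this forces $\mathrm{cdim}(X) = \dim X = p^{2(n-m)}(p^m-1)$ and completes the argument.

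The entire difficulty is concentrated in Karpenko's incompressibility result, and I expect it to be the main obstacle: its proof does not follow from the elementary Severi--Brauer geometry used above, but rests on the structure of the Chow groups and the Grothendieck group of Severi--Brauer varieties together with the action of the modulo-$p$ Steenrod operations, which are used to exhibit a non-$p$-divisible cycle class obstructing any compression. By contrast, the elementary ingredients contribute only the upper bound and the reduction to $\mathrm{cdim}(X)$.
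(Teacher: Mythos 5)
Your argument is correct, and in substance it coincides with the paper's treatment: the paper gives no proof of this statement at all, but records it as Karpenko's theorem with the citation \cite{karpenko:00, karpenko:09}, and that cited incompressibility result is precisely what you invoke for the lower bound, so the hard content is black-boxed in both cases. What you supply, correctly, is the dictionary that the paper leaves implicit. Since $\Mod_{D,1/p^m}(K)$ has at most one element (Proposition \ref{prop:unique}) and is nonempty exactly when $D \otimes_k K$ contains a right ideal projective of rank $1/p^m$ (Example \ref{ex:simple}, as used in the proof of Proposition \ref{prop:bound}), the functor $\Mod_{D,1/p^m}$ is the detection functor of $X = \SB(1/p^m, D)$, and its essential dimension is the canonical dimension of $X$; the upper bound $\mathrm{cdim}(X) \leq \dim X = p^{2(n-m)}(p^m-1)$ is then the special case of Corollary \ref{cor:ed_D}, and the lower bound follows from $\mathrm{cdim}(X) \geq \mathrm{cdim}_p(X)$ together with Karpenko's $p$-incompressibility theorem. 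One bookkeeping point is worth making explicit: in the paper's rank normalization, a right ideal projective of rank $1/p^m$ has reduced dimension $p^{n-m}$, so $X$ is the variety denoted $\SB(p^{n-m}, D)$ in Karpenko's papers, and his theorem asserts $p$-incompressibility only for ideals of $p$-power reduced dimension (not for arbitrary generalized Severi--Brauer varieties of $D$); since $p^{n-m}$ is such a power for every $1 \leq m \leq n$, your reduction is complete.
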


Colliot-Th\'el\`ene, Karpenko and Merkurjev \cite[\S1]{ckm:07} have conjectured that
the above bound is sharp if $D$ has center $k$, and $d = \deg D$:
\begin{conjecture}[Colliot-Th\'el\`ene, Karpenko, Merkurjev] \label{conj:CKM} ${}_{}$\\\indent
  If $D$ is a central division algebra over $k$, then
  \begin{equation*}
    \ed_k( \Mod_{D, 1/\deg D}) = \sum_{p|\deg D} ( p^{v_p( \deg D)} - 1).
  \end{equation*}
\end{conjecture}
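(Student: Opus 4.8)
The plan is to separate the claimed equality into its two inequalities and to observe that only one of them is open. For the inequality ``$\leq$'' I would simply specialize Corollary~\ref{cor:ed_D} to $d = \deg D$: then $v_p(\deg D/d) = 0$ and $v_p(d) = v_p(\deg D)$ for every prime $p \mid \deg D$, so the factor $p^{2v_p(\deg D/d)}$ equals $1$ and the bound collapses exactly to $\sum_{p \mid \deg D}(p^{v_p(\deg D)} - 1)$. Hence the upper bound is already a theorem, and the entire conjectural content is the matching lower bound.

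To set up the lower bound I would first reinterpret the functor. By Example~\ref{ex:simple}, writing $D \otimes_k K \cong \Mat_{n \times n}(D')$ with $D'$ a central division algebra over $K$, one has $n = \deg D/\ind(D \otimes_k K)$, so $\Mod_{D, 1/\deg D}(K) \neq \emptyset$ if and only if $\ind(D \otimes_k K) = 1$, i.e.\ if and only if $D$ splits over $K$; and in that case the set is a singleton. Thus $\Mod_{D, 1/\deg D}$ is the splitting functor of $D$, whose essential dimension equals the canonical dimension $\cd \SB(D)$ of the Severi--Brauer variety by the standard identification of these invariants (valid since $\SB(D)$ is smooth and projective). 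The conjecture therefore becomes
\begin{equation*}
  \cd \SB(D) = \sum_{p \mid \deg D}(p^{v_p(\deg D)} - 1).
\end{equation*}

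Next I would invoke the primary decomposition $D \cong \bigotimes_{p \mid \deg D} D_p$ with $\deg D_p = p^{v_p(\deg D)}$, as in Corollary~\ref{cor:ed_D}. Its product formula identifies $\Mod_{D, 1/\deg D}$ with $\prod_p \Mod_{D_p, 1/\deg D_p}$, hence the splitting of $\SB(D)$ with the simultaneous splitting of $\prod_p \SB(D_p)$. Karpenko's theorem in the case $m = n$ gives $\ed_k(\Mod_{D_p, 1/\deg D_p}) = p^{v_p(\deg D)} - 1 = \dim \SB(D_p)$, so each prime-power factor is already incompressible and attains its bound. What remains is the additivity $\cd\bigl(\prod_p \SB(D_p)\bigr) = \sum_p \cd \SB(D_p)$: sub-additivity of canonical dimension reproves ``$\leq$'', so the genuine task is super-additivity across the distinct primes.

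The main obstacle is exactly this super-additivity, and it is the reason the statement is only a conjecture. The natural lower-bound tool is the essential $p$-dimension: after a prime-to-$p$ field extension every factor $\SB(D_q)$ with $q \neq p$ acquires a closed point of degree prime to $p$ and becomes $p$-negligible, so the Karpenko--Merkurjev computation of canonical $p$-dimension yields $\ed_k(\Mod_{D, 1/\deg D}; p) = p^{v_p(\deg D)} - 1$. But this only produces $\ed_k(\Mod_{D, 1/\deg D}) \geq \max_{p \mid \deg D}(p^{v_p(\deg D)} - 1)$, never the full sum, because each $p$-localization isolates a single prime. Proving the lower bound therefore seems to demand a genuinely integral incompressibility argument for the product $\prod_p \SB(D_p)$ -- for instance a simultaneous control of the torsion in its Chow groups or of its $\gamma$-filtration at all primes at once, or a fibration/degeneration argument that prevents the distinct-prime contributions from cancelling. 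I would not expect the prime-by-prime methods that settle the prime-power case to suffice, and indeed they do not; this is precisely the difficulty that keeps the conjecture open.
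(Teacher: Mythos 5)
This statement is a conjecture, and the paper offers no proof of it: immediately before stating it, the paper itself frames the conjectural content exactly as you do, namely that the bound of Corollary~\ref{cor:ed_D} (equivalently Proposition~\ref{prop:bound} applied to the primary factors) is sharp when $d = \deg D$. Your derivation of the inequality ``$\leq$'' by specializing Corollary~\ref{cor:ed_D} to $d = \deg D$ is correct, your reformulation of $\Mod_{D,1/\deg D}$ as the splitting functor of $D$ (hence of the conjecture as a canonical-dimension statement for $\SB(D)$, or for $\prod_p \SB(D_p)$ via the primary decomposition) is accurate, and your diagnosis of the genuinely open part is the right one: Karpenko's incompressibility theorem settles each prime-power factor, essential $p$-dimension arguments only yield the maximum over $p$ rather than the sum, and no integral argument is known that forces super-additivity across distinct primes. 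So your proposal correctly proves everything that is provable here and honestly identifies the rest as open, which is all that can be asked of a statement that the paper, too, leaves as a conjecture.
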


\section{Endomorphism Algebras of Coherent Sheaves} \label{sec:End}
Let $X \hookrightarrow \bbP^N_k$ be a projective scheme over the base field $k$.
We put $X_S := X \times_k S$ for each $k$-scheme $S$, and $X_K := X \otimes_k K$ for each field $K \supseteq k$.
Let $E$ be a coherent sheaf over $X_K$. Its endomorphism algebra $\End( E)$ satisfies
\begin{equation*}
  \dim_K \End( E) < \infty,
\end{equation*}
since $\End( E)$ is the space of the global sections of the coherent sheaf of endomorphisms of $E$.
Therefore, the theory of finite-dimensional algebras applies to $\End( E)$.
Let $\frakj( E)$ denote the Jacobson radical of $\End( E)$. Wedderburn's Theorem states
\begin{equation} \label{eq:wedderburn}
  \End( E)/\frakj( E) \cong \prod_i \Mat_{n_i \times n_i}( D_i)
\end{equation}
for some finite-dimensional division algebras $D_i$ over $K$ and some integers $n_i \geq 1$.

A nonzero coherent sheaf $E$ over $X_K$ is called \emph{indecomposable} if $E \cong E' \oplus E''$
implies that either $E' = 0$ or $E'' = 0$. Then $\End( E)/\frakj( E)$ is a division ring $D$,
according to Lemma 6 in \cite{atiyah:56}. We will use the following slightly more general fact. 
\begin{lemma} \label{lem:decompose}
  In the notation of \eqref{eq:wedderburn}, the coherent sheaf $E$ admits a decomposition
  \begin{equation*}
    E \cong \bigoplus_i E_i^{\oplus n_i}
  \end{equation*}
  into indecomposable coherent sheaves $E_i$ with $\End( E_i)/\frakj( E_i) \cong D_i$.
\end{lemma}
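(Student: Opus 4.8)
The plan is to prove Lemma~\ref{lem:decompose} by leveraging the Krull--Schmidt property for coherent sheaves over $X_K$, combined with the lifting of idempotents that was established in Section~\ref{sec:modules}. The fundamental point is that the algebraic structure of $\End(E)$ dictates the decomposition of $E$: a direct sum decomposition $E \cong E' \oplus E''$ corresponds precisely to a pair of orthogonal idempotents $e', e'' \in \End(E)$ summing to the identity, with $E' = e'E$ and $E'' = e''E$. So the task reduces to finding a suitable system of orthogonal idempotents inside $\End(E)$ that realizes the Wedderburn decomposition \eqref{eq:wedderburn}.

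First I would recall that $\End(E)$ is a finite-dimensional $K$-algebra, hence right-artinian, and that its Jacobson radical $\frakj(E)$ is nilpotent (as cited in the proof of Proposition~\ref{prop:unique}). The semisimple quotient $\End(E)/\frakj(E) \cong \prod_i \Mat_{n_i \times n_i}(D_i)$ contains a canonical complete system of orthogonal primitive idempotents: inside each matrix factor $\Mat_{n_i \times n_i}(D_i)$ we have the $n_i$ diagonal matrix units $\bar{e}_{i,1}, \dots, \bar{e}_{i,n_i}$, and collecting these across all factors gives orthogonal idempotents summing to $1$. Using Lemma~\ref{lem:lift} (and its iteration, as in the proof of Corollary~\ref{cor:lift}, to lift an orthogonal \emph{system} rather than a single idempotent), I would lift these to a complete system of orthogonal idempotents $e_{i,j} \in \End(E)$ with $\sum_{i,j} e_{i,j} = \id$. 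Setting $E_{i,j} := e_{i,j}E$ yields a decomposition $E \cong \bigoplus_{i,j} E_{i,j}$.

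Next I would identify the pieces. For a fixed $i$, the idempotents $\bar{e}_{i,1}, \dots, \bar{e}_{i,n_i}$ in the simple factor $\Mat_{n_i \times n_i}(D_i)$ are conjugate, and in fact the matrix units $\bar{e}_{i,\alpha\beta}$ provide $(R/\frakj)$-linear isomorphisms between the corresponding summands. Because $E_{i,\alpha}$ and $E_{i,\beta}$ are images of idempotents that are conjugate modulo the nilpotent ideal $\frakj(E)$, the same conjugation argument as in the uniqueness part of Corollary~\ref{cor:lift} shows the lifted idempotents $e_{i,\alpha}$ and $e_{i,\beta}$ are conjugate in $\End(E)$, whence $E_{i,\alpha} \cong E_{i,\beta}$. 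Writing $E_i := E_{i,1}$, this gives $E \cong \bigoplus_i E_i^{\oplus n_i}$. Finally, each $E_i$ is indecomposable because $e_{i,1}$ lifts a primitive idempotent, so $\End(E_i) = e_{i,1}\End(E)e_{i,1}$ is a local ring; passing to the quotient by its radical, one computes $\End(E_i)/\frakj(E_i) \cong D_i$, matching the claim.

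The main obstacle I anticipate is the bookkeeping required to lift an entire orthogonal \emph{system} of idempotents simultaneously, rather than just one at a time: Lemma~\ref{lem:lift} only lifts a single idempotent, and when one lifts them one-by-one the earlier lifts must be adjusted to stay orthogonal to the later ones. The standard remedy is an inductive procedure where, having lifted $e_1, \dots, e_{m-1}$ orthogonally, one lifts the next idempotent and then replaces it by $(1 - e_1 - \cdots - e_{m-1})\tilde{e}_m(1 - e_1 - \cdots - e_{m-1})$ to force orthogonality, re-applying the idempotent-lifting trick within the corner ring. This is routine but is the one place where care is genuinely needed; everything else follows formally from the correspondence between idempotents and direct summands, together with the conjugacy argument already present in Corollary~\ref{cor:lift}.
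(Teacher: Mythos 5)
Your proof is correct, but it takes a genuinely different route from the paper's. You work at the level of idempotents: you lift the full system of diagonal matrix units of \eqref{eq:wedderburn} to a complete orthogonal system of idempotents in $\End(E)$, which requires strengthening Lemma~\ref{lem:lift} from single idempotents to orthogonal systems; the corner-ring induction you describe is the standard and correct way to do this, and completeness of the lifted system is automatic, since $\id - \sum_{i,j} e_{i,j}$ is then an idempotent lying in the nilpotent ideal $\frakj(E)$, hence zero. The paper instead works at the level of modules and never lifts more than one idempotent at a time: it first splits $E$ along the product decomposition of $\End(E)/\frakj(E)$ by lifting one idempotent per splitting (inductively reducing to the case where $\End(E)/\frakj(E)$ is simple), and then, when $\End(E)/\frakj(E) \cong \Mat_{n \times n}(D)$, it uses Corollary~\ref{cor:lift} to lift the column module $\Mat_{n \times 1}(D)$ to a projective $\End(E)$-module $M$ of rank $1/n$ and sets $E_1 := M \otimes_{\End(E)} E$, so that $E_1^{\oplus n} \cong E$ comes directly from $M^{\oplus n}$ being free of rank one. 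In effect, Corollary~\ref{cor:lift} already encapsulates the bookkeeping that your orthogonal-system lifting redoes by hand, and your use of its uniqueness part to identify $E_{i,\alpha} \cong E_{i,\beta}$ is exactly the same mechanism in disguise (isomorphic reductions of the projective modules $e_{i,\alpha}\End(E)$ and $e_{i,\beta}\End(E)$ force them, and hence their images in $E$, to be isomorphic). What your approach buys is explicitness: the summands appear concretely as $e_{i,j}E$, and indecomposability is immediate because the corner ring $e_{i,1}\End(E)e_{i,1}$ is local, with residue division ring $D_i$. What the paper's approach buys is economy: no new lifting lemma beyond Section~\ref{sec:modules} is needed, and the grouping into isotypical pieces $E_i^{\oplus n_i}$ is built into the construction rather than argued afterwards.
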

\begin{proof}
  Suppose that there is an isomorphism of $K$-algebras
  \begin{equation*}
    \End( E)/\frakj( E) \cong A' \times A''.
  \end{equation*}
  Then $(1, 0) \in A' \times A''$ corresponds to an element $q \in \End( E)/\frakj( E)$ with $q^2 = q$.
  Lemma \ref{lem:lift} allows us to lift $q$ to an element $p \in \End( E)$ with $p^2 = p$. Therefore,
  \begin{equation*}
    E = E' \oplus E''
  \end{equation*}
  with $E' := \im p \subseteq E$ and $E'' := \im( 1-p) \subseteq E$. We have
  \begin{equation*}
    \End( E')/\frakj( E') \cong A' \quad\text{and}\quad \End( E'')/\frakj( E'') \cong A'',
  \end{equation*}
  since $\End( E') = p\End( E)p$ and $\End( E'') = (1-p)\End( E)(1-p)$.

  The above argument allows us to assume that $\End( E)/\frakj( E)$ is simple, say
  \begin{equation*}
    \End( E)/\frakj( E) \cong \Mat_{n \times n}( D).
  \end{equation*}
  Corollary \ref{cor:lift} allows us to lift the projective module $\Mat_{1 \times n}( D)$ over this algebra
  to a projective module $M$ of rank $1/n$ over $\End( E)$. The coherent sheaf
  \begin{equation*}
    E_1 := M \otimes_{\End( E)} E
  \end{equation*}
  over $X_K$ satisfies $E_1^{\oplus n} \cong E$, and therefore $\End( E_1)/\frakj( E_1) \cong D$.
  The latter implies that $E_1$ is indecomposable.
\end{proof}

\begin{lemma} \label{lem:connected}
  Suppose that the scheme $X$ is connected and has a rational point $P \in X( k)$.
  Let $E$ be an indecomposable vector bundle over $X_K$. Then we have
  \begin{equation*}
    \dim_K \End( E)/\frakj( E) \leq \rank( E).
  \end{equation*}
\end{lemma}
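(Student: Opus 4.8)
The plan is to restrict $E$ to the fibre over the rational point $P$ and to compare the division algebra $D := \End(E)/\frakj(E)$ with that fibre. Since $E$ is indecomposable, $D$ is a division ring over $K$, as recalled before Lemma \ref{lem:decompose}. Base changing $P \in X(k)$ gives a $K$-rational point $P_K \in X_K(K)$, and I would write $E_P := E \otimes_{\calO_{X_K}} \kappa(P_K)$ for the fibre of $E$ there, a $K$-vector space of dimension $\rank(E)$. Restriction of endomorphisms to this fibre, $\phi \mapsto \phi_P := \phi \otimes \id$, defines a homomorphism of $K$-algebras $\End(E) \to \End_K(E_P)$, so that $E_P$ becomes a (left) module over $\End(E)$ that is finite-dimensional over $K$.

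The first key step is that the radical cannot act surjectively on the fibre. As $\End(E)$ is finite-dimensional, its Jacobson radical $\frakj(E)$ is nilpotent, say $\frakj(E)^n = 0$. If we had $\frakj(E)\cdot E_P = E_P$, then iterating would give $E_P = \frakj(E)^n E_P = 0$; but $E_P \neq 0$, since $P_K$ lies in the support of $E$. Hence $\frakj(E) E_P \subsetneq E_P$, and the quotient
\[
  \overline{E}_P := E_P/\frakj(E) E_P
\]
is a nonzero module over $\End(E)/\frakj(E) = D$.

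The second key step uses that $D$ is a division ring: every nonzero $D$-module is free, so $\overline{E}_P$ is a $D$-vector space of some dimension $s \geq 1$. Passing to $K$-dimensions then yields
\[
  \dim_K D \leq s \cdot \dim_K D = \dim_K \overline{E}_P \leq \dim_K E_P = \rank(E),
\]
which is exactly the claimed inequality.

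The step that needs the most care — and the only place where both hypotheses on $X$ enter — is the assertion $E_P \neq 0$, i.e. that $P_K$ meets the support of the indecomposable bundle $E$. Were $X_K$ disconnected, $E$ could a priori be supported away from $P_K$, and $\rank(E)$ would not even be a single integer. I would rule this out by recording that a connected proper $k$-scheme with a $k$-rational point is geometrically connected: the finite $k$-algebra $\rmH^0(X, \calO_X)$ is local because $X$ is connected, and the rational point forces its residue field to be $k$, so it remains local after any base change. Thus $X_K$ is connected, $\rank(E)$ is well defined, and $\dim_K E_P = \rank(E)$ at every point of $X_K$, in particular at $P_K$.
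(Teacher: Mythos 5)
Your proof is correct and follows essentially the same route as the paper: both view the fibre $E_P$ at the rational point as a nonzero left $\End(E)$-module and compare $K$-dimensions with the division algebra $D = \End(E)/\frakj(E)$. You merely make explicit two steps the paper leaves terse---the Nakayama-type argument that $\frakj(E)E_P \neq E_P$, so that $E_P/\frakj(E)E_P$ is a nonzero $D$-vector space, and the geometric connectedness of $X_K$ (which the paper justifies by noting that every connected component of $X_K$ contains the point over $P$, while you argue via $\rmH^0(X,\calO_X)$ being local with residue field $k$).
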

\begin{proof}
  Note that $X_K$ is still connected, because each connected component of it contains the point $P$.
  Therefore, the rank of $E$ is constant over $X_K$.

  Since $E$ is indecomposable, $\End( E)/\frakj( E)$ is a division algebra $D$ by Lemma 6 in \cite{atiyah:56}.
  The fiber $E_P$ of $E$ at $P$ is a nonzero left module over $\End( E)$, and hence
  \begin{equation*}
    \dim_K( D) \leq \dim_K( E_P) = \rank( E). \qedhere
  \end{equation*}
\end{proof}
Recall that the projective $k$-scheme $X$ is an \emph{elliptic curve} if $X$ is a connected smooth curve
of genus one with a rational point $P \in X( k)$.

Let $\Sstab_{\mu}( X)$ denote the category of semistable vector bundles over $X$ of fixed slope
$\mu \in \bbQ$, and let $\Tors( X)$ denote the category of coherent torsion sheaves over $X$.
We will use the following version of the Fourier-Mukai transform in \cite{polishchuk:03}.
\begin{theorem} \label{thm:FM}
  Suppose that $X$ is an elliptic curve over the base field $k$. There is a natural equivalence of categories
  \begin{equation*}
    T: \Sstab_{\mu}( X) \longrightarrow \Tors( X).
  \end{equation*}
  For any semistable vector bundle $E$ of rank $r$ and degree $d$ over $X$, one has
  \begin{equation*}
    \dim_k \rmH^0( X, T( E)) = \gcd( r, d).
  \end{equation*}
\end{theorem}
\begin{proof}
  See Theorem 14.7, and the remark following it, in \cite{polishchuk:03}.
\end{proof}

\begin{proposition} \label{prop:elliptic}
  Suppose that $X$ is an elliptic curve. Let $E$ be an indecomposable vector bundle over $X_K$.
  Then $\End( E)/\frakj( E)$ is a commutative field.
\end{proposition}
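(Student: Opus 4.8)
The plan is to transport the problem to the algebraic closure $\overline{K}$, where Atiyah's classification is available, and to recognize the commutativity of $D := \End(E)/\frakj(E)$ as a multiplicity-one phenomenon. By Lemma~\ref{lem:connected} the ring $D$ is a finite-dimensional $K$-division algebra; writing $L$ for its center and $e := \deg_L D$, the algebra $D$ is commutative exactly when $e = 1$. Since $\overline{K}$ is faithfully flat over $K$, commutativity of $D$ is equivalent to commutativity of $D \otimes_K \overline{K}$, and this in turn depends only on the semisimplification of $D \otimes_K \overline{K}$ through its central simple structure over $L$, which gives
\begin{equation*}
  (D \otimes_K \overline{K})/\mathrm{rad} \cong \prod_{\sigma} \Mat_{e \times e}(\overline{K}),
\end{equation*}
one factor for each embedding $\sigma$ of $L$ into $\overline{K}$. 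I would therefore analyze the same semisimplification from the geometric side, via a decomposition $E \otimes_K \overline{K} \cong \bigoplus_i G_i^{\oplus m_i}$ into pairwise non-isomorphic indecomposable bundles $G_i$ over $X_{\overline{K}}$.

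The key geometric input is Atiyah's theorem that over the algebraically closed field $\overline{K}$ the endomorphism ring of an indecomposable vector bundle on an elliptic curve is local with residue field $\overline{K}$; hence $\End(G_i)/\frakj(G_i) \cong \overline{K}$ and
\begin{equation*}
  \End(E \otimes_K \overline{K})/\frakj(E \otimes_K \overline{K}) \cong \prod_i \Mat_{m_i \times m_i}(\overline{K}).
\end{equation*}
Because $\End(E) \otimes_K \overline{K} = \End(E \otimes_K \overline{K})$ and $\frakj(E) \otimes_K \overline{K}$ is a nilpotent ideal, this product is precisely the semisimplification of $D \otimes_K \overline{K}$. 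Comparing the two descriptions by the uniqueness in Artin--Wedderburn, I would conclude that all $m_i$ equal $e$. Thus $D$ is commutative if and only if $E \otimes_K \overline{K}$ is multiplicity-free, and the entire problem reduces to showing that no indecomposable summand of $E \otimes_K \overline{K}$ occurs with multiplicity at least $2$.

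For the multiplicity statement I would exploit that $E$ is indecomposable over $K$: the automorphisms of $\overline{K}$ over $K$ permute the classes $[G_i]$ transitively, which re-proves that the $m_i$ coincide, so a repeated factor would descend a single block $G^{\oplus m}$ with $m = e \geq 2$ by a cocycle valued in $\Aut(G)$. Since $\End(G)$ is local with residue field $\overline{K}$, the reductive quotient of $\Aut(G)$ is $\Gm$, so the obstruction to reducing the multiplicity to $m = 1$ is a Brauer class over $L$. The rational point $P \in X(k)$ supplies a Galois-equivariant fibre functor $G \mapsto G_P$ on which $D$ acts through $\overline{K}$-lines; evaluating the descent cocycle at $P$ trivializes this class and forces $e = 1$. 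This last step — converting the rational point into a splitting of the endomorphism gerbe — is the part I expect to be the main obstacle, and it is exactly where the hypotheses that $X$ has genus one and carries the point $P$ enter; the remaining reductions are formal.
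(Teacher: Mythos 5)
Your formal reductions are correct, and they parallel the first half of the paper's own proof: the paper realizes one Galois block as an indecomposable direct summand $F$ of $E \otimes_K L$, where $L$ is the center of $D$, with $\End(F)/\frakj(F) \cong D$ (via Lemma \ref{lem:decompose}), and then obtains $F \otimes_L L^{\alg} \cong V^{\oplus n}$ with $n = \deg_L D$ equal to your $e$; so both routes reduce the proposition to showing that this multiplicity is $1$. The gap is in your final step, in two respects. First, $D = \End(E)/\frakj(E)$ is a quotient of $\End(E)$, not a subalgebra, so it does not act on the fibre $E_P$, nor on $G_P$: the radical does not act by zero on fibres (already for the rank-two Atiyah bundle the generator of $\frakj$ is nonzero on every fibre). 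The only fibres carrying a natural $D$-action are those of the quotient sheaf $F/\frakj(F)\cdot F$. Second, and decisively: after this repair, evaluating the descent data at $P$ does \emph{not} trivialize the obstruction class. The fibre of $F/\frakj(F)\cdot F$ at $P$ is a nonzero left $D$-module, and since every such module has $L$-dimension a multiple of $\dim_L D = e^2$, all you obtain is $e^2 \leq e \cdot \rank\bigl(G/\frakj(G)\cdot G\bigr)$, i.e.\ $e$ divides the rank of the top of $G$. That is a restatement of Lemma \ref{lem:connected}, which you had already invoked, and it is perfectly consistent with $e \geq 2$ (say $G$ stable of rank $2$ and $e = 2$).

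Moreover, no argument of the shape you sketch can close this gap, because nothing in it actually uses $g = 1$, while the statement is false for curves of genus $g \geq 2$ carrying a rational point. Indeed, for $h = \gcd(r,d) \geq 2$ let $\calG$ be the residue gerbe of the generic point of $\Bun_{C,r,d}$; then $\ind \psi_{\calG} = h$ (Proposition 5.1 in \cite{dn:89}, as cited at the end of the proof of Theorem \ref{thm:ed_vb}), so by Example \ref{ex:Gm-gerbe} the algebra $A = \End(\pi_* F_0)$, $F_0 \in \calG(l)$, is central simple with underlying division algebra $D_0$ of degree $h$, and Lemma \ref{lem:decompose} then splits off an indecomposable bundle $F$ over $C_{k(\calG)}$ with $\End(F)/\frakj(F) \cong D_0$ noncommutative; your fibre-functor mechanism would apply verbatim to this $F$. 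What the paper inserts at exactly the point your sketch leaves blank is the genus-one geometry of Atiyah's classification \cite{atiyah:57}: the top $V/\frakj(V)\cdot V$ of the relevant indecomposable $V$ over $X_{L^{\alg}}$ is a \emph{line} bundle, so $F/\frakj(F)\cdot F$ has rank $e$ over $X_L$, and its fibre at $P$, being a nonzero left $D$-module of $L$-dimension $e$, forces $e \geq e^2$, i.e.\ $e = 1$. (One can also finish with two divisibilities: write $V \cong V' \otimes U$ with $V'$ stable and $U$ the degree-zero Atiyah bundle, so that $\rank V'$ and $\deg V'$ are coprime; the fibre at $P$ gives $e \mid \rank V'$, and since $\rmH^0$ and $\rmH^1$ of $F/\frakj(F)\cdot F$ are left $D$-modules while $\chi = \deg$ in genus one, also $e \mid \deg V'$, whence $e = 1$.) Either way, the rational point alone only reproves Lemma \ref{lem:connected}; it is the elliptic-curve input that makes the proposition true.
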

\begin{proof}
  Because the Harder-Narasimhan filtration of any vector bundle over the elliptic curve $X_K$ splits,
  and $E$ is indecomposable, it follows that $E$ is semistable.

  Taking $K$ as base field, the equivalence $T$ in Theorem \ref{thm:FM} maps $E$ to an indecomposable coherent
  torsion sheaf $T( E)$ over $X_K$. Any such sheaf $T( E)$ satisfies
  \begin{equation*}
    T( E) \cong \calO_{X_K}/\calI_x^n
  \end{equation*}
  for some integer $n \geq 1$ and some closed point $x \in X_K$, where $\calI_x \subset \calO_{X_K}$ denotes
  the ideal sheaf of $x$. As the endomorphism algebras of $T( E)$ and of $E$ are isomorphic,
  it follows that $\End( E)/\frakj( E)$ is isomorphic to the residue field of the point $x$.
\end{proof}

\section{Fields of Definition for Coherent Sheaves} \label{sec:fields_of_def}

As before, $X$ is a projective scheme over a field $k$. Let $\Coh_X$ denote the moduli stack of coherent sheaves
over $X$ (cf.\ \cite{lepotier:97} and \cite{hl:10} for moduli spaces of sheaves).
The stack $\Coh_X$ is given by the following groupoid $\Coh_X( S)$ for each $k$-scheme $S$:
\begin{itemize}
 \item An object in $\Coh_X( S)$ is a coherent sheaf $\E$ over $X_S$ which is flat over $S$.
 \item A morphism in $\Coh_X( S)$ is an isomorphism of coherent sheaves.
\end{itemize}
Th\'{e}or\`{e}me 4.6.2.1 in \cite{lm:00} states that $\Coh_X$ is an Artin stack, and that it is locally
of finite type over $k$ (cf.\ also \cite{gomez:01} or \cite{hoffmann:10} for the curve case).

We consider a point of $\Coh_X$, in the sense of \cite[Section 5]{lm:00}. Let $\calG$ be the residue gerbe
of this point, and let $k( \calG)$ denote its residue field. Th\'{e}or\`{e}me 11.3 in \cite{lm:00} states
that $\calG$ is an Artin stack of finite type over the field $k( \calG)$.
\begin{remark} \label{rem:moduli}
  Any coherent sheaf $E$ over $X_K$ for a field $K \supseteq k$ defines a point of $\Coh_X$.
  The residue gerbe $\calG$ of this point parameterizes forms of $E$. The residue field $k( \calG) \subseteq K$
  is known as the field of moduli for $E$. It is also denoted by $k( E)$.
\end{remark}

As before, let $\calG$ be a residue gerbe of $\Coh_X$, with residue field $k( \calG)$.
Hilbert's Nullstellensatz allows us to choose a field extension $l \supseteq k( \calG)$ with
\begin{equation} \label{eq:d}
  d := [l : k( \calG)] < \infty
\end{equation}
such that $\calG( l) \neq \emptyset$. We choose a coherent sheaf $F$ over $X_l$ which is an object in the
groupoid $\calG( l)$. Denoting by $\pi: X_l \twoheadrightarrow X_{k( \calG)}$ the canonical projection, we put
\begin{equation} \label{eq:A}
  A := \End( \pi_* F).
\end{equation}
This section will relate the residue gerbe $\calG$ to the endomorphism algebra $A$.
\begin{example} \label{ex:Gm-gerbe}
  A coherent sheaf $E$ over $X_K$ for some field $K \supseteq k$ is called \emph{simple} if
  \begin{equation*}
    \End( E) = K.
  \end{equation*}
  Let $\calG$ be a residue gerbe of $\Coh_X$ that parameterizes simple sheaves. Then
  $\calG$ is a gerbe with band $\Gm$ over $k( \calG)$, and $A$ is a central simple algebra of degree $d$
  over $k( \calG)$. Both define the same element in the Brauer group of $k( \calG)$.
\end{example}

\begin{theorem} \label{thm:equivalence}
  In the situation preceding the example, consider a field $K \supseteq k( \calG)$.
  Then the following two categories are equivalent:
  \begin{itemize}
   \item the category of coherent sheaves $E$ over $X_K$ which are objects in $\calG( K)$, and
   \item the category of projective modules $M$ of rank $1/d$ over $A_K := A \otimes_{k( \calG)} K$.  
  \end{itemize}
\end{theorem}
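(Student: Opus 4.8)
The plan is to exhibit the equivalence through the adjoint pair of functors
\begin{equation*}
  \Phi(M) := M \otimes_{A_K} (\pi_* F)_K \qquad\text{and}\qquad \Psi(E) := \Hom_{X_K}\big((\pi_* F)_K,\, E\big),
\end{equation*}
where $(\pi_* F)_K$ carries the left $A_K$-action coming from $A = \End(\pi_* F)$, so that $\Phi(M)$ is a coherent sheaf on $X_K$ and $\Psi(E)$ is a right $A_K$-module via precomposition. The tensor--hom adjunction gives $\Phi \dashv \Psi$, with unit $\eta_M \colon M \to \Psi\Phi(M)$ and evaluation counit $\epsilon_E \colon \Phi\Psi(E) \to E$. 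Everything then reduces to four assertions: that $\Psi(E)$ is projective of rank $1/d$ when $E$ is an object of $\calG(K)$; that $\Phi(M)$ is an object of $\calG(K)$ when $M$ is projective of rank $1/d$; and that $\eta_M$ and $\epsilon_E$ are isomorphisms on these subcategories.

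First I would reduce all four assertions to the case of an algebraically closed field. Choosing an algebraically closed $\Omega \supseteq K$ that also contains a copy of $l$, the base change $(-) \otimes_K \Omega$ is faithfully flat, and both functors commute with it: $\Phi$ because $M$ is projective, and $\Psi$ because $\Hom$ of coherent sheaves on the projective scheme $X_K$ commutes with flat base change. Since being an isomorphism, being projective of a given rank (Proposition \ref{prop:unique}), and being an object of the gerbe can all be tested after this extension, it suffices to prove the four assertions over $\Omega$. Over $\Omega$ the gerbe $\calG$ is neutral, so every object of $\calG(\Omega)$ is isomorphic to $F_\Omega := F \otimes_l \Omega$, and by Proposition \ref{prop:unique} there is a unique projective $A_\Omega$-module $M_0$ of rank $1/d$; thus I only have to treat the single pair $(F_\Omega, M_0)$.

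The core computation is the identification of $(\pi_* F)_\Omega$. By flat base change it is $F \otimes_{k(\calG)} \Omega$, carrying a natural action of the commutative $\Omega$-algebra $\Lambda := l \otimes_{k(\calG)} \Omega$ of dimension $d$; it is flat over $\Lambda$, and $F_\Omega = (\pi_* F)_\Omega \otimes_\Lambda \Omega$. When $l/k(\calG)$ is separable, $\Lambda \cong \Omega^{d}$, so $(\pi_* F)_\Omega \cong F_\Omega^{\oplus d}$ and $A_\Omega \cong \Mat_{d \times d}(\End F_\Omega)$; here the four assertions are exactly the classical Morita equivalence between finitely generated projective $\Mat_{d \times d}(\End F_\Omega)$-modules and the additive category generated by $F_\Omega$. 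A direct calculation gives $\Psi(F_\Omega) \cong \Mat_{1\times d}(\End F_\Omega)$, which is projective of rank $1/d$, with $\eta_{M_0}$ and $\epsilon_{F_\Omega}$ isomorphisms.

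The main obstacle is the general case, where $l/k(\calG)$ may be inseparable: then $\Lambda$ is a local Artinian algebra and $(\pi_* F)_\Omega$ is a possibly nonsplit $\Lambda$-thickening of $F_\Omega$, so $A_\Omega$ acquires a nonzero Jacobson radical. My approach here is to exploit that the maximal ideal $\mathfrak{m}_\Lambda \subseteq \Lambda$ is nilpotent, hence generates a nilpotent two-sided ideal of $A_\Omega$, modulo which one recovers the semisimple situation above. Using Corollary \ref{cor:lift} to lift projective modules and idempotents across this nilpotent ideal, together with the $\Lambda$-flatness of $(\pi_* F)_\Omega$ and Nakayama's lemma to propagate isomorphisms from the residue field $\Lambda \to \Omega$, I would transport the Morita equivalence from $A_\Omega/\mathfrak{m}_\Lambda A_\Omega$ back to $A_\Omega$. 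This yields that $\Psi(F_\Omega)$ is projective of rank $1/d$ and that $\eta_{M_0}$ and $\epsilon_{F_\Omega}$ are isomorphisms; descending along $\Omega/K$ then gives the asserted equivalence.
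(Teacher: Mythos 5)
Your adjoint pair $\Phi,\Psi$ is exactly the pair of mutually inverse functors the paper uses, and your reduction to an algebraically closed overfield plus the Morita computation handles the separable case correctly (modulo a small point: descending ``projective of rank $1/d$'' along the infinite extension $\Omega/K$ needs the Noether--Deuring theorem, not just Proposition \ref{prop:unique}; the paper avoids this entirely by base changing only to a finite extension $L \supseteq K$ containing $l$, where descent is mere restriction of scalars). The genuine gap is in your final paragraph, i.e.\ precisely in the inseparable case, which is where the real difficulty of the theorem sits. The claim that $\mathfrak{m}_\Lambda$ ``generates a nilpotent two-sided ideal of $A_\Omega$'' is false, because $\Lambda$ is not central in $A_\Omega$: an $\calO_{X_\Omega}$-linear endomorphism of $(\pi_* F)_\Omega$ has no reason to commute with the $\Lambda$-action, and a nilpotent element of a non-central subalgebra need not generate a nilpotent ideal. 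Indeed, in the situation the theorem forces one has $(\pi_* F)_\Omega \cong F_\Omega^{\oplus d}$, hence $A_\Omega \cong \Mat_{d \times d}(\End F_\Omega)$ with $\Lambda$ embedded in $\Mat_{d \times d}(\Omega)$ by its regular representation; any nonzero $t \in \mathfrak{m}_\Lambda$ is then a nonzero nilpotent matrix, and the two-sided ideal it generates contains $\Mat_{d \times d}(\Omega)\, t \,\Mat_{d \times d}(\Omega) = \Mat_{d \times d}(\Omega) \ni 1$. So your ideal is the unit ideal, $A_\Omega/\mathfrak{m}_\Lambda A_\Omega$ is the zero ring, and Corollary \ref{cor:lift} cannot be invoked. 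For the same reason your premise that inseparability of $l/k(\calG)$ makes $\frakj(A_\Omega) \neq 0$ is wrong: when $F_\Omega$ is simple, $A_\Omega$ is a matrix algebra over $\Omega$ no matter how inseparable $l$ is (Example \ref{ex:Gm-gerbe}).

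What is actually missing is a proof that the thickening splits, i.e.\ that $(\pi_* F)_\Omega \cong F_\Omega^{\oplus d}$ also when $l/k(\calG)$ is inseparable. This is not a formality: if $(\pi_* F)_\Omega$ really could be a nonsplit iterated self-extension of $F_\Omega$, as your setup allows, the theorem itself would be false --- for instance $A_\Omega$ could be $\Omega[\epsilon]/(\epsilon^2)$ with $d = 2$ (the endomorphism ring of a nonsplit self-extension of a line bundle), a local ring over which every projective module is free, so that no projective module of rank $1/2$ exists at all even though $\calG(\Omega) \neq \emptyset$; no lifting or Nakayama argument can produce an equivalence there, so the nonsplit case must be ruled out, not accommodated. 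The input needed is the gerbe property over the Artinian ring $\Lambda$, not merely over fields: $F \otimes_{k(\calG)} \Omega$ and $F_\Omega \otimes_\Omega \Lambda$ are two objects of $\calG$ over $\Spec \Lambda$, hence fppf-locally isomorphic; their Isom-scheme is a torsor under the unit group scheme of $\End( F_\Omega) \otimes_\Omega \Lambda$, which is smooth, and since $\Lambda$ is a product of Henselian local rings with residue field $\Omega$ and the closed fibre contains $\id_{F_\Omega}$, the torsor has a section. The paper's own proof never meets this issue: it passes to a finite extension $L \supseteq K$ containing $l$ with $E_L \cong F \otimes_l L$, identifies $(M \otimes_K L)^{\oplus d}$ with the free module $A_L$ by the projection formula, and descends to $K$ using $[L:K] < \infty$ together with Proposition \ref{prop:unique}. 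You should either supply the splitting argument above or follow the paper's finite-extension route; as written, the last paragraph of your proof does not go through.
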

\begin{proof}
  We will describe mutually inverse functors between these two categories.

  In one direction, we send a coherent sheaf $E$ over $X_K$ to the $A_K$-module
  \begin{equation*}
    M := \Hom \big( ( \pi_* F) \otimes_{k( \calG)} K, E \big).
  \end{equation*}
  Suppose that $E$ is an object in $\calG( K)$. As $\calG$ is a gerbe over $k( \calG)$, 
  there is a field extension $L \supseteq k( \calG)$ containing $l$ and $K$ such that
  $E \otimes_K L$ and $F \otimes_l L$ are isomorphic over $X_L$;
  we may assume that $[L:K] < \infty$. From this we conclude
  \begin{equation*}
    M \otimes_K L \cong \Hom \big( ( \pi_* F) \otimes_{k( \calG)} L, F \otimes_l L \big).
  \end{equation*}
  Therefore the $A_L$-module $(M \otimes_K L)^{\oplus d}$ is free of rank one, by the projection
  formula. Consequently, its underlying $A_K$-module $M^{\oplus d \cdot [L:K]}$ is free of rank $[L:K]$.
  This shows that the $A_K$-module $M$ is projective of rank $1/d$.

  In the opposite direction, we send an $A_K$-module $M$ to the quasicoherent sheaf
  \begin{equation*}
    E := M \otimes_A \pi_* F
  \end{equation*}
  over $X_K$. Suppose that $M$ is projective of rank $1/d$. Then $E$ is coherent.
  We choose a field extension $L \supseteq k( \calG)$ containing $l$ and $K$.
  Proposition \ref{prop:M_R} implies
  \begin{equation*}
    M \otimes_K L \cong \Hom \big( ( \pi_* F) \otimes_{k( \calG)} L, F \otimes_l L \big),
  \end{equation*}
  since both are projective $A_L$-modules of rank $1/d$. Therefore $E \otimes_K L$ and $F \otimes_l L$
  are isomorphic as coherent sheaves over $X_L$. Hence $E$ is an object in $\calG( K)$. 
\end{proof}
\begin{corollary} \label{cor:ed(G)}
  Let $\calG$ be a residue gerbe of the moduli stack $\Coh_X$ as above.
  \begin{itemize}
   \item[i)] Given a field $K \supseteq k( \calG)$, all objects in the groupoid $\calG( K)$ are isomorphic.
   \item[ii)] For the $k( \calG)$-algebra $A$ in \eqref{eq:A} and the integer $d$ in \eqref{eq:d}, we have
    \begin{equation*}
      \ed_{k( \calG)}( \calG) = \ed_{k( \calG)}( \Mod_{A, 1/d}).
    \end{equation*}
  \end{itemize}
\end{corollary}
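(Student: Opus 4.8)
The plan is to deduce both assertions directly from Theorem \ref{thm:equivalence}, the only extra ingredient being Proposition \ref{prop:unique}. First I would record the elementary point that $A_K = A \otimes_{k(\calG)} K$ is a finite-dimensional $K$-algebra, since $A$ is finite-dimensional over $k(\calG)$; in particular $A_K$ is right-artinian, so Proposition \ref{prop:unique} applies to its projective modules, and the functor $\Mod_{A, 1/d}$ is exactly the one studied in Section \ref{sec:ed+algs}.

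For part (i), Theorem \ref{thm:equivalence} identifies the category of objects of $\calG( K)$ with the category of projective $A_K$-modules of rank $1/d$. By Proposition \ref{prop:unique} any two such modules are isomorphic, and an equivalence of categories both preserves and reflects isomorphisms; transporting a chosen isomorphism of modules back through the equivalence shows that any two objects of $\calG( K)$ are isomorphic.

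For part (ii), I would upgrade the family of equivalences of Theorem \ref{thm:equivalence}, one for each $K$, to a single isomorphism of functors $\Fields/k(\calG) \to \Sets$ between the functor $K \mapsto \{\text{iso classes in }\calG( K)\}$, whose essential dimension is $\ed_{k(\calG)}(\calG)$ by definition, and the functor $\Mod_{A, 1/d}$. The functor $E \mapsto \Hom\big((\pi_* F) \otimes_{k(\calG)} K, E\big)$ used in the proof of Theorem \ref{thm:equivalence} commutes with a field extension $K \subseteq K'$, since the base-change map
\[
  \Hom\big((\pi_* F) \otimes_{k(\calG)} K, E\big) \otimes_K K' \longrightarrow \Hom\big((\pi_* F) \otimes_{k(\calG)} K', E \otimes_K K'\big)
\]
is an isomorphism by flat base change for $\Hom$ of coherent sheaves over the projective scheme $X$ along the flat extension $K \subseteq K'$. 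Hence the pointwise bijections on isomorphism classes are natural in $K$, giving the desired isomorphism of functors. Two isomorphic functors have equal essential dimension, because an isomorphism identifies, for each intermediate field $k(\calG) \subseteq K' \subseteq K$, the images of the restriction maps on both sides, so "defined over $K'$" means the same thing for corresponding elements; therefore $\ed_{k(\calG)}(\calG) = \ed_{k(\calG)}(\Mod_{A, 1/d})$.

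The main obstacle is precisely this naturality check in part (ii): one must verify that the equivalence of Theorem \ref{thm:equivalence} is compatible with extension of the ground field $K$, so that fields of definition match on the two sides. Everything hinges on the flat base-change isomorphism for the Hom functor between the fixed coherent sheaf $(\pi_* F) \otimes_{k(\calG)} K$ and the varying sheaf $E$; once this is established, the equality of essential dimensions is immediate from the definition of the essential dimension of a functor.
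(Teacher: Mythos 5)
Your proposal is correct and takes essentially the same route as the paper, whose entire proof is ``Just combine Theorem \ref{thm:equivalence} with Proposition \ref{prop:unique}.'' The naturality check you single out as the main obstacle is actually superfluous: by Proposition \ref{prop:unique} both functors take values in sets with at most one element, so an element of $\calG( K)$ (resp.\ of $\Mod_{A, 1/d}( K)$) is defined over an intermediate field $K'$ if and only if $\calG( K') \neq \emptyset$ (resp.\ $\Mod_{A, 1/d}( K') \neq \emptyset$), and Theorem \ref{thm:equivalence} already shows that these nonemptiness conditions agree field by field, which gives part (ii) without any base-change argument.
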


\begin{proof}
  Just combine Theorem \ref{thm:equivalence} with Proposition \ref{prop:M_R}. 
\end{proof}

\begin{corollary} \label{cor:vb_bound}
  Suppose that $X$ is connected and has a $k$-rational point.
  If $E$ is a vector bundle of rank $r \geq 1$ over $X_K$ for some field $K \supseteq k$, then
  \begin{equation*}
    \ed_{k( E)}( E) \leq r - 1.
  \end{equation*}
\end{corollary}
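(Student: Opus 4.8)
The plan is to reduce the statement to the module bounds of Section~\ref{sec:ed+algs} and then feed in the rank constraint of Lemma~\ref{lem:connected}. Let $\calG$ be the residue gerbe of the point of $\Coh_X$ determined by $E$, so that $k(\calG) = k(E)$ and $E$ is an object of $\calG(K)$. A field $k(E) \subseteq K' \subseteq K$ is a field of definition for $E$ as a coherent sheaf precisely when $E$ descends to an object of $\calG(K')$ (base change does not change the underlying point of $\Coh_X$); hence $\ed_{k(E)}(E) \le \ed_{k(E)}(\calG)$, and Corollary~\ref{cor:ed(G)} gives
\begin{equation*}
  \ed_{k(E)}(E) \le \ed_{k(E)}(\Mod_{A, 1/d}), \qquad A = \End(\pi_* F), \quad d = [l : k(E)].
\end{equation*}

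Next I would analyze the structure of $A$. Since $X$ is connected with a $k$-rational point, $X_{k(E)}$ is connected, so $\pi \colon X_l \to X_{k(E)}$ is finite flat of \emph{constant} degree $d$; as $F$ is locally free of rank $r$, the pushforward $\pi_* F$ is a vector bundle of rank $dr$ over $X_{k(E)}$. Applying Lemma~\ref{lem:decompose} to $\pi_* F$ yields $\pi_* F \cong \bigoplus_j G_j^{\oplus m_j}$ with $G_j$ indecomposable and $\End(G_j)/\frakj(G_j) \cong \Delta_j$ a division algebra over $k(E)$, so that $A/\frakj(A) \cong \prod_j \Mat_{m_j \times m_j}(\Delta_j)$ by Wedderburn.

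Then I would run the reduction chain of Section~\ref{sec:ed+algs}: Proposition~\ref{prop:reduce} replaces $A$ by $A/\frakj(A)$, Proposition~\ref{prop:product} splits off the simple factors, and Corollary~\ref{cor:ed_bound} (with $n = m_j$, $B = \Delta_j$ simple, module rank $1/d$) bounds each factor, giving
\begin{equation*}
  \ed_{k(E)}(\Mod_{A, 1/d}) \le \sum_j \ed_{k(E)}(\Mod_{\Mat_{m_j}(\Delta_j), 1/d}) < \sum_j m_j \cdot \tfrac{1}{d} \dim_{k(E)} \Delta_j.
\end{equation*}
The crux is the rank estimate: Lemma~\ref{lem:connected}, applied over $k(E)$ to each indecomposable $G_j$, gives $\dim_{k(E)} \Delta_j \le \rank(G_j)$, whence
\begin{equation*}
  \sum_j m_j \dim_{k(E)} \Delta_j \le \sum_j m_j \rank(G_j) = \rank(\pi_* F) = dr.
\end{equation*}
Dividing by $d$, the displayed bound reads $\ed_{k(E)}(\Mod_{A, 1/d}) < r$; since essential dimension is a nonnegative integer, this forces $\ed_{k(E)}(E) \le r - 1$.

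The main obstacle I anticipate is the bookkeeping that makes the degree-$d$ factor cancel exactly: one must track the module rank $1/d$ coming from Corollary~\ref{cor:ed(G)} against the rank $dr$ of $\pi_* F$, and verify that connectedness plus the rational point transfer to $X_{k(E)}$ so that $\pi$ has constant degree and Lemma~\ref{lem:connected} applies to each $G_j$. A secondary point to handle with care is strictness: the index set is nonempty because $\pi_* F \neq 0$, so the strict inequality of Corollary~\ref{cor:ed_bound} survives summation and delivers the sharp bound $r-1$ rather than $r$.
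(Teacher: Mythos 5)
Your proposal is correct and follows essentially the same route as the paper's own proof: reduce via the residue gerbe and Corollary \ref{cor:ed(G)} to $\Mod_{A,1/d}$ for $A = \End(\pi_*F)$, decompose $\pi_*F$ into indecomposables via Wedderburn and Lemma \ref{lem:decompose}, bound each simple factor with Corollary \ref{cor:ed_bound} plus Lemma \ref{lem:connected}, and reassemble with Propositions \ref{prop:product} and \ref{prop:reduce} to get the strict bound $\ed_{k(E)}(\Mod_{A,1/d}) < r$. The extra bookkeeping you flag (constancy of the degree of $\pi$, $\rank(\pi_*F) = dr$, nonemptiness of the index set) is exactly the cancellation the paper performs implicitly in the line $\tfrac{1}{d}\rank(\pi_*F) = \rank(F) = r$.
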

\begin{proof}
  In the above, we take for $\calG$ the residue gerbe of the point given by $E$.
  Then the chosen coherent sheaf $F$ over $X_l$ is a vector bundle of rank $r$.
  According to Wedderburn's Theorem, we have
  \begin{equation*}
    \End( \pi_* F)/\frakj( \pi_* F) \cong \prod_i A_i \quad\text{with}\quad A_i \cong \Mat_{n_i \times n_i}( D_i)
  \end{equation*}
  for some division algebras $D_i$ over $k( E)$. Using Lemma \ref{lem:decompose}, we obtain that
  \begin{equation*}
    \pi_* F \cong \bigoplus_i E_i^{\oplus n_i} \quad\text{with}\quad \End( E_i)/\frakj( E_i) \cong D_i
  \end{equation*}
  for some vector bundles $E_i$ over $X_{k( E)}$.
  Corollary \ref{cor:ed_bound} and Lemma \ref{lem:connected} imply that
  \begin{equation*}
    \ed_{k( E)}( \Mod_{A_i, 1/d}) < \frac{n_i}{d} \dim_{k( E)} D_i \leq \frac{n_i}{d} \rank( E_i).
  \end{equation*}
  Using Proposition \ref{prop:product} and Proposition \ref{prop:reduce}, we conclude that
  \begin{equation*}
    \ed_{k( E)}( \Mod_{\End( \pi_* F), 1/d}) < \frac{1}{d} \rank( \pi_* F) = \rank( F) = r.
  \end{equation*}
  According to Corollary \ref{cor:ed(G)}, this means $\ed_{k( E)}( E) < r$, as claimed.
\end{proof}
\begin{corollary} \label{cor:elliptic}
  If $X$ is an elliptic curve over $k$, and $E$ is a vector bundle over $X_K$ for some field $K \supseteq k$,
  then $E$ is defined over its field of moduli $k( E) \subseteq K$.
\end{corollary}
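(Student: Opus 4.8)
The goal is to show that $E$ is defined over $k(E)$, which means producing a coherent sheaf over $X_{k(E)}$ whose base change to $K$ is isomorphic to $E$. Taking $\calG$ to be the residue gerbe of the point given by $E$, and keeping the notation $F$, $\pi\colon X_l \twoheadrightarrow X_{k(E)}$, $A = \End(\pi_* F)$ and $d = [l:k(E)]$ of the preceding sections, the plan is to prove that $\calG(k(E)) \neq \emptyset$. Indeed, by Theorem \ref{thm:equivalence} this is equivalent to the existence of a projective $A$-module of rank $1/d$ over $k(E)$ itself, and once $\calG(k(E)) \neq \emptyset$, part i) of Corollary \ref{cor:ed(G)} forces the object of $\calG(k(E))$ to become isomorphic to $E$ after base change to $K$. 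So the whole statement reduces to exhibiting one rank-$1/d$ projective module over the $k(E)$-algebra $A$.

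As in the proof of Corollary \ref{cor:vb_bound}, the sheaf $\pi_* F$ is a vector bundle over $X_{k(E)}$, and Lemma \ref{lem:decompose} gives a decomposition $\pi_* F \cong \bigoplus_i E_i^{\oplus n_i}$ into indecomposable vector bundles with $\End(E_i)/\frakj(E_i) \cong D_i$, so that $A/\frakj(\pi_* F) \cong \prod_i \Mat_{n_i \times n_i}(D_i)$ by Wedderburn. Since $X$ is an elliptic curve with a rational point, the base change $X_{k(E)}$ is again an elliptic curve over $k(E)$, so Proposition \ref{prop:elliptic} applies to each indecomposable $E_i$ and shows that every $D_i$ is a commutative field $l_i \supseteq k(E)$. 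This commutativity is the decisive input.

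The crux is the divisibility $d \mid n_i$. The chosen object $F \in \calG(l)$ corresponds, via Theorem \ref{thm:equivalence}, to a genuine projective module of rank $1/d$ over $A \otimes_{k(E)} l$. Reducing modulo the nilpotent two-sided ideal generated by $\frakj(\pi_* F)$ and by the radical of $l_i \otimes_{k(E)} l$, and using Corollary \ref{cor:lift} to transport modules across these nilpotent reductions, one is left with a rank-$1/d$ module over a product of simple factors $\Mat_{n_i \times n_i}(l_{ij})$ for various fields $l_{ij} \supseteq l$. Here commutativity of $D_i = l_i$ is exactly what keeps each Wedderburn matrix size equal to $n_i$, rather than enlarging it by a Schur index when a noncommutative $D_i$ is split by $l$. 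Restricting the rank-$1/d$ module to a single factor $\Mat_{n_i \times n_i}(l_{ij})$ and invoking Example \ref{ex:simple} then yields $n_i/d \in \bbZ$, that is, $d \mid n_i$.

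Finally, $d \mid n_i$ means that each $\Mat_{n_i \times n_i}(l_i)$ already carries a projective module of rank $1/d$ over $k(E)$, namely $\Mat_{n_i \times (n_i/d)}(l_i)$, by Example \ref{ex:simple}. Assembling these over the product $A/\frakj(\pi_* F)$ and lifting through the nilpotent radical by Corollary \ref{cor:lift} produces a projective $A$-module of rank $1/d$ over $k(E)$, hence $\calG(k(E)) \neq \emptyset$, which completes the argument. I expect the main obstacle to be the divisibility transfer in the third step: one must argue carefully that reducing the rank-$1/d$ module from $A \otimes_{k(E)} l$ down to $A$ preserves the constraint $d \mid n_i$, and this is precisely the place where Proposition \ref{prop:elliptic} is indispensable, since for a general projective scheme a noncommutative $D_i$ would allow $d \mid n_i e_i$ with $e_i > 1$ and block the descent.
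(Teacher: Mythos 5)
Your proof is correct and follows essentially the same route as the paper: both hinge on Proposition \ref{prop:elliptic} forcing the division algebras $D_i$ to be commutative, then extract the divisibility $d \mid n_i$ via Theorem \ref{thm:equivalence}, Example \ref{ex:simple} and Corollary \ref{cor:lift}, and finally descend to $k(E)$ by lifting the module $\prod_i \Mat_{n_i \times (n_i/d)}(l_i)$ and invoking Corollary \ref{cor:ed(G)}. The only (harmless) organizational difference is that the paper first reduces to the case $[K:k(E)] < \infty$ and runs the argument with $\pi_* E$ itself, whereas you keep $K$ arbitrary and obtain the divisibility from the auxiliary pair $(l, F)$ of Section \ref{sec:fields_of_def}.
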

\begin{proof}
  Hilbert's Nullstellensatz implies that some pullback of $E$ is already defined over some extension field
  of finite degree over $k( E)$. Therefore, Corollary \ref{cor:ed(G)} allows us to assume without loss of
  generality that $K$ has finite degree over $k( E)$.

  Let $d$ denote the degree of $K$ over $k( E)$. Let $\pi: X_K \twoheadrightarrow X_{k( E)}$
  be the canonical projection. Lemma \ref{lem:decompose} and Proposition \ref{prop:elliptic} together imply that
  \begin{equation*}
    \End( \pi_* E)/\frakj( \pi_* E) \cong \prod_i \Mat_{n_i \times n_i}( K_i)
  \end{equation*}
  for some (commutative!) fields $K_i \supseteq k( E)$ and some integers $n_i \geq 1$.

  Now we use Theorem \ref{thm:equivalence}, Proposition \ref{prop:M_R}, and Corollary \ref{cor:lift}.
  They allow us to conclude that since $E$ is defined over $K$, each integer $n_i$ is divisible by $d$,
  and therefore $E$ is already defined over $k( E)$.
\end{proof}

\section{Moduli of Sheaves with Nilpotent Endomorphisms} \label{sec:moduli}

As before, $X$ is a projective scheme over a field $k$. Let $\Nil_X^n$ denote the moduli stack of
coherent sheaves $E$ over $X$ and morphisms $\varphi: E \to E$ with $\varphi^n = 0$.
This stack is given by the following groupoid $\Nil_X^n( S)$ for each $k$-scheme $S$:
\begin{itemize}
 \item An object in $\Nil_X^n( S)$ consists of a coherent sheaf $\E$ over $X_S$ and a morphism
  $\varphi: \E \to \E$ with $\varphi^n = 0$ such that $\E$ and
  all $\coker( \varphi^i)$ are flat over $S$.
 \item A morphism in $\Nil_X^n( S)$ from $(\E, \varphi)$ to $(\F, \psi)$ is an isomorphism
  of coherent sheaves $\alpha: \E \to \F$ with $\alpha \circ \varphi = \psi \circ \alpha$.
\end{itemize}
If $(\E, \varphi)$ is an object in $\Nil_X^n(S)$, then $\im( \varphi)/\im( \varphi^i)$ is flat over $S$ for
each $i$, because $\E/\im( \varphi)$ and $\E/\im( \varphi^i)$ are so by assumption. The forgetful $1$-morphism
\begin{equation*}
  \Nil_X^n \longrightarrow \Coh_X, \qquad (\E, \varphi) \longmapsto \E
\end{equation*}
is representable and is of finite type. Therefore $\Nil_X^n$ is an Artin stack, and it is locally
of finite type over $k$. We have $\Nil_X^0 = \Spec( k)$ and $\Nil_X^1 = \Coh_X$.
We will describe $\Nil_X^n$ for $n \geq 2$ using the following moduli stacks of extensions.
 
Let $\calM^{\map}_X$ be the moduli stack of morphisms $\varphi: E_1\to E_2$ of coherent sheaves over $X$.
It is given by the following groupoid $\calM^{\map}_X( S)$ for each $k$-scheme $S$:
\begin{itemize}
 \item An object in $\calM^{\map}_X( S)$ is a morphism $\varphi: \E_1\to \E_2$ of
  coherent sheaves over $X_S$ such that $\coker( \varphi)$, $\im( \varphi)$ and
  $\ker( \varphi)$ are all flat over $S$.
 \item A morphism in $\calM^{\map}_X( S)$ from $\varphi: \E_1\to \E_2$ to
  $\psi: \F_1\to \F_2$ consists of two
  isomorphisms $\alpha_i: \E_i\to \F_i$ such that $\alpha_2 \circ \varphi = \psi \circ \alpha_1$.
\end{itemize}
We can also view an object $\varphi: \E_1\to  \E_2$ in $\calM^{\map}_X( S)$ as a pair of extensions
\begin{equation*}
  0 \to \ker( \varphi) \to \E_1 \to \im( \varphi) \to 0, \qquad
  0 \to \im( \varphi) \to \E_2 \to \coker( \varphi) \to 0.
\end{equation*}
In particular, $\E_1$ and $\E_2$ are also flat over $S$. The forgetful $1$-morphism
\begin{equation*}
  \calM^{\map}_X \longrightarrow \Coh_X \times_k \Coh_X, \qquad (\varphi: \E_1 \to \E_2) \longmapsto (\E_1, \E_2)
\end{equation*}
is representable and is of finite type. Therefore $\calM^{\map}_X$ is an Artin stack, and it is locally
of finite type over $k$.

Let $\calM^{\sub}_X$ be the moduli stack of pairs $E_1 \subset E_2$ of coherent sheaves over $X$.
We can identify it with the open substack in $\calM^{\map}_X$ where $\varphi$ is injective.

Let $\calM^{\subs}_X$ be the moduli stack of triples $E_1 \subset E \supset E_2$ of coherent sheaves over $X$.
It is given by the following groupoid $\calM^{\subs}_X( S)$ for each $k$-scheme $S$:
\begin{itemize}
 \item An object in $\calM^{\subs}_X( S)$ is a triple $\E_1 \subset \E \supset \E_2$ of coherent
  sheaves over $X_S$ such that $\frac{\E}{\E_1 + \E_2}$,
  $\frac{\E_1 + \E_2}{\E_i} \cong \frac{\E_{3-i}}{\E_1 \cap \E_2}$ and
  $\E_1 \cap \E_2$ are all flat over $S$.
 \item A morphism in $\calM^{\subs}_X( S)$ from $\E_1 \subset \E\supset \E_2$ to $\F_1 \subset \F\supset \F_2$
  is an isomorphism of coherent sheaves $\alpha: \E\to \F$ with $\alpha( \E_i) = \F_i$ for both $i$.
\end{itemize}
The forgetful $1$-morphism
\begin{equation*}
  \calM^{\subs}_X \longrightarrow \Coh_X, \qquad (\E_1 \subset \E \supset \E_2) \longmapsto \E
\end{equation*}
is representable and locally of finite type. Therefore $\calM^{\subs}_X$ is an Artin stack, and it is locally
of finite type over $k$. We will use the natural $1$-morphisms
\begin{align*}
  \prcap: \calM^{\subs}_X & \longrightarrow \calM^{\sub}_X, & (\E_1 \subset \E \supset \E_2)
    & \longmapsto (\E_1 \cap \E_2 \subset \E_1) \qquad\text{and}\\
  \prto: \calM^{\subs}_X & \longrightarrow \calM^{\map}_X, & (\E_1 \subset \E \supset \E_2)
    & \longmapsto (\E_2 \hookrightarrow \E \twoheadrightarrow \E/\E_1).
\end{align*}

Let $\calM^{\filt}_X$ be the moduli stack of chains $E_1 \subset E_2 \subset E$ of coherent sheaves over $X$.
This is the open substack in $\calM^{\subs}_X$ defined by the condition $\E_1 \subset \E_2$.
\begin{theorem} \label{thm:Nil_X}
  The natural $1$-morphism
  \begin{equation*}
    \pr_n: \Nil_X^{n+1} \longrightarrow \Nil_X^n, \qquad
      (\E, \varphi) \longmapsto (\im \varphi, \varphi|_{\im \varphi})
  \end{equation*}
  is isomorphic to a composition of pullbacks of the natural $1$-morphisms
  \begin{align*}
    \prsub: \calM^{\sub}_X & \longrightarrow \Coh_X,
      & (\E_1 \subset \E_2) & \longmapsto \E_1 \qquad\text{and}\\
    \prext: \calM^{\filt}_X & \longrightarrow \calM^{\map}_X,
      & (\E_1 \subset \E_2 \subset \E) & \longmapsto (\E_2 \hookrightarrow \E \twoheadrightarrow \E/\E_1).
  \end{align*}
  More precisely, the commutative diagram of stacks
  \begin{equation} \label{eq:Nil_cartesian} \xymatrix{
    \Nil_X^{n+1} \ar[rrr]^-{(\E, \varphi) \mapsto (\ker \varphi \subset \E \supset \im \varphi)}
      \ar[d]_{\pr_n} &&& \calM^{\subs}_X \ar[d]^{\prto}\\
    \Nil_X^n \ar[rrr]^{(\F, \psi) \mapsto (\psi: \F \to \F)} &&& \calM^{\map}_X
  } \end{equation}
  is cartesian, and the fibered product of stacks
  \begin{equation*} \xymatrix{
    \calM^{\map}_X \times_{\Coh_X} \calM^{\sub}_X \ar[rr] \ar[d] && \calM^{\sub}_X \ar[d]^{\prsub}\\
    \calM^{\map}_X \ar[rr]^{(\psi: \F' \to \F) \mapsto \ker \psi} && \Coh_X
  } \end{equation*}
  makes the following commutative diagram of stacks cartesian as well:
  \begin{equation} \label{eq:M_cartesian} \xymatrix{
    \calM^{\subs}_X \ar[rrrrr]^{(\E_1 \subset \E \supset \E_2) \mapsto (\E_1 \subset \E_1 + \E_2 \subset \E)}
      \ar[d]_{\prto \times \prcap} &&&&& \calM^{\filt}_X \ar[d]^{\prext}\\
    \calM^{\map}_X \times_{\Coh_X} \calM^{\sub}_X \ar[rrrrr]^-{(\psi: \F' \to \F, \ker \psi \subset \F'')
      \mapsto (\psi + 0: \frac{\F' \oplus \F''}{\ker \psi} \to \F)} &&&&& \calM^{\map}_X.
  } \end{equation}
  Here $\tfrac{\F' \oplus \F''}{\ker \psi}$ is the pushout of $\F'$ and $\F''$
  along their common subsheaf $\ker \psi$.
\end{theorem}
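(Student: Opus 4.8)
The plan is to establish each of the two squares as a cartesian square of stacks by checking, for every $k$-scheme $S$, that the induced functor on $S$-valued groupoids is an equivalence. In each case I would first confirm that the square commutes up to a canonical $2$-isomorphism, and then exhibit an explicit reconstruction of objects realizing the inverse equivalence. Throughout, the central bookkeeping is to match the several flatness conditions built into the definitions of $\Nil_X^n$, $\Mto_X$, $\Minto_X$, $\Mintointo_X$ and $\Mintootni_X$.

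For the square \eqref{eq:Nil_cartesian}, the commutativity rests on the first isomorphism theorem: for $(\E, \varphi) \in \Nil_X^{n+1}(S)$ the canonical map $\E/\ker\varphi \xrightarrow{\sim} \im\varphi$ identifies $\prto(\ker\varphi \subset \E \supset \im\varphi) = (\im\varphi \hookrightarrow \E \twoheadrightarrow \E/\ker\varphi)$ with the object $(\varphi|_{\im\varphi}\colon \im\varphi \to \im\varphi)$ coming from $\pr_n(\E,\varphi)$. For cartesianness I would start from compatible data: an object $(\F, \psi) \in \Nil_X^n(S)$, an object $(\E_1 \subset \E \supset \E_2) \in \Mintootni_X(S)$, and an isomorphism in $\Mto_X(S)$ between $(\psi\colon \F \to \F)$ and $(\E_2 \hookrightarrow \E \twoheadrightarrow \E/\E_1)$, that is, isomorphisms $\alpha\colon \F \xrightarrow{\sim} \E_2$ and $\beta\colon \F \xrightarrow{\sim} \E/\E_1$ intertwining $\psi$ with the composite $\E_2 \hookrightarrow \E \twoheadrightarrow \E/\E_1$. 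I reconstruct the endomorphism as the composite
\begin{equation*}
  \varphi\colon \E \twoheadrightarrow \E/\E_1 \xrightarrow{\beta^{-1}} \F \xrightarrow{\alpha} \E_2 \hookrightarrow \E.
\end{equation*}
Then $\ker\varphi = \E_1$ and $\im\varphi = \E_2$ are immediate, while the intertwining condition gives $\varphi|_{\im\varphi} = \alpha \circ \psi \circ \alpha^{-1}$; since $\psi^n = 0$, this yields $\varphi^{n+1} = 0$ because $\varphi$ maps $\E$ into $\im\varphi$ where $\varphi^n$ already vanishes. Hence $(\E, \varphi) \in \Nil_X^{n+1}(S)$ and $\pr_n$ recovers $(\F,\psi)$ up to the isomorphism $\alpha$.

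For the square \eqref{eq:M_cartesian}, I would first record that the left-hand map $\prto \times \prcap$ lands in $\Mto_X \times_{\Coh_X} \Minto_X$ because $\ker(\E_2 \hookrightarrow \E \twoheadrightarrow \E/\E_1) = \E_1 \cap \E_2 = \prsub(\E_1 \cap \E_2 \subset \E_1)$. Commutativity then reduces to the second isomorphism theorem: the pushout $\tfrac{\E_2 \oplus \E_1}{\E_1 \cap \E_2}$ is canonically $\E_1 + \E_2$, under which $\psi + 0$ becomes $\E_1 + \E_2 \hookrightarrow \E \twoheadrightarrow \E/\E_1$, matching $\prext(\E_1 \subset \E_1 + \E_2 \subset \E)$. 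For cartesianness I start from a chain $(\E_1 \subset \E_2^{\mathrm{mid}} \subset \E)$, a pair $((\psi\colon \F' \to \F), (\ker\psi \subset \F''))$, and an isomorphism $\gamma\colon \E_2^{\mathrm{mid}} \xrightarrow{\sim} \tfrac{\F' \oplus \F''}{\ker\psi}$, $\delta\colon \E/\E_1 \xrightarrow{\sim} \F$ in $\Mto_X(S)$. I then set $\E_2 := \gamma^{-1}(\F') \subseteq \E_2^{\mathrm{mid}} \subseteq \E$, using that $\F' \hookrightarrow \tfrac{\F' \oplus \F''}{\ker\psi}$ is injective. A direct check shows $\gamma(\E_1) = \F''$, hence $\E_1 + \E_2 = \E_2^{\mathrm{mid}}$ and $\E_1 \cap \E_2 = \gamma^{-1}(\ker\psi)$, so that $\prto$ and $\prcap$ of the reconstructed triple return the prescribed data.

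The main obstacle I expect is not these algebraic reconstructions, which are the usual isomorphism-theorem manipulations, but the verification that they are compatible with the flatness conditions over an arbitrary base $S$. Concretely, I must show that the quotients and intersections appearing in the definitions of $\Mintootni_X$ and $\Mintointo_X$ (such as $\E/(\E_1+\E_2)$, $(\E_1+\E_2)/\E_i$ and $\E_1 \cap \E_2$) remain flat precisely when the sheaves $\coker(\varphi^i)$ do, so that the reconstructed objects genuinely lie in the correct groupoids and the inverse functors are defined integrally rather than only on field-valued points. Granting this flatness bookkeeping, full faithfulness of each comparison functor is a routine check on the defining isomorphisms of morphisms, and the two cartesian squares combine to exhibit $\pr_n$ as the asserted composition of pullbacks of $\prsub$ and $\prext$.
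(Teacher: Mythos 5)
Your proposal is correct in substance and follows the paper's own proof essentially step for step: for \eqref{eq:Nil_cartesian} you reconstruct the endomorphism as the same composite $\varphi: \E \twoheadrightarrow \E/\E_1 \xrightarrow{\beta^{-1}} \F \xrightarrow{\alpha} \E_2 \hookrightarrow \E$, and for \eqref{eq:M_cartesian} you recover $\E_2$ as the copy of $\F'$ inside the middle term, identify $\E_1$ with the copy of $\F''$ by comparing kernels, and use the cocartesian property of the pushout to conclude $\E_1 + \E_2 = \E_3$ and $\E_1 \cap \E_2 \cong \ker\psi$, exactly as the paper does.

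The one step you leave open is the flatness bookkeeping, and two comments are in order there. First, you only need one implication, not the ``precisely when'': the reconstructed object must satisfy the flatness conditions of its target groupoid, given the flatness conditions already imposed on the input data. Second, this is quicker than you anticipate, and it is the only point where the paper argues beyond isomorphism-theorem formalities. For \eqref{eq:Nil_cartesian}: $\coker\varphi = \E/\E_2$ is an extension of $\E/(\E_1+\E_2)$ by $(\E_1+\E_2)/\E_2 \cong \E_1/(\E_1\cap\E_2)$, both flat by the definition of $\Mintootni_X(S)$; and since $\im(\varphi^{i+1}) = \alpha(\im\psi^i)$, the sheaf $\coker(\varphi^{i+1})$ is an extension of $\E/\E_2$ by $\E_2/\alpha(\im\psi^i) \cong \coker(\psi^i)$, again flat; extensions of flat sheaves by flat sheaves are flat. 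For \eqref{eq:M_cartesian}, writing $\E_3$ for the middle term of the given chain (your $\E_2^{\mathrm{mid}}$), the four conditions defining $\Mintootni_X(S)$ hold for the reconstructed triple because $\E_1 \cap \E_2 \cong \ker\psi$, $(\E_1+\E_2)/\E_1 \cong \im\psi$, $(\E_1+\E_2)/\E_2 \cong \F''/\ker\psi$, and $\E/(\E_1+\E_2) = \E/\E_3$, each flat by hypothesis on the given objects in $\Mto_X(S)$, $\Minto_X(S)$ and $\Mintointo_X(S)$ respectively. With these insertions your argument coincides with the paper's.
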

\begin{proof}
  We start with diagram \eqref{eq:Nil_cartesian}. Let an object $( \F, \psi)$ in $\Nil_X^n( S)$ be given,
  together with an object $\E_1 \subset \E \supset \E_2$ in $\calM^{\subs}_X( S)$. Let
  \begin{equation*} \xymatrix{
    \F \ar[d]_{\alpha} \ar[rr]^{\psi} && \F \ar[d]^{\beta}\\
    \E_2 \ar@{^{(}->}[r] & \E \ar@{->>}[r] & \E/\E_1
  } \end{equation*}
  be an isomorphism between the two images in $\calM^{\map}_X( S)$. Then the composition
  \begin{equation*}
    \varphi: \E \twoheadrightarrow \E/\E_1 \xrightarrow{\beta^{-1}} \F
      \xrightarrow{\alpha} \E_2 \hookrightarrow \E
  \end{equation*}
  has image $\E_2$ and restriction $\varphi|_{\E_2} = \alpha \circ \psi \circ \alpha^{-1}$. Since $\E/\E_2$ and
  all $\coker( \psi^i)$ are flat over $S$, we conclude that all $\coker( \varphi^i)$ are flat over $S$ as well.

  Therefore $( \E, \varphi)$ is an object in $\Nil_X^{n+1}( S)$ which gives back the given objects.
  This construction is functorial and shows that the diagram \eqref{eq:Nil_cartesian} is cartesian.

  It remains to consider diagram \eqref{eq:M_cartesian}. Let an object $\E_1 \subset \E_3 \subset \E$ in
  $\calM^{\filt}_X( S)$ be given, together with an object $\psi: \F'\to \F$ in $\calM^{\map}_X( S)$ and an object
  $\ker \psi \subset \F''$ in $\calM^{\sub}_X( S)$ that have the same image $\ker \psi$ in $\Coh_X( S)$. Let
  \begin{equation*} \xymatrix{
    \frac{\F' \oplus \F''}{\ker \psi} \ar[d]_{\alpha' + \alpha''} \ar[rr]^{\psi + 0} && \F \ar[d]^{\beta}\\
    \E_3 \ar@{^{(}->}[r] & \E \ar@{->>}[r] & \E/\E_1
  } \end{equation*}
  be an isomorphism between the two images in $\calM^{\map}_X( S)$.

  Comparing the cokernels and the kernels of the two horizontal maps, we see that $\E/\E_3 \cong \coker \psi$
  and $\E_1 = \alpha''( \F'')$ in $\E_3$. We put $\E_2 := \alpha'( \F')$ in $\E_3$.

  As $\alpha' + \alpha''$ is an isomorphism from the pushout of $\F'$ and $\F''$ to $\E_3$, the diagram
  \begin{equation*} \xymatrix{
    \ker \psi \ar@{^{(}->}[r] \ar@{^{(}->}[d] & \F'' \ar[d]^{\alpha''}\\
    \F' \ar[r]^{\alpha'} & \E_3
  } \end{equation*}
  is cocartesian.
  This implies that $\E_3 = \E_1 + \E_2$, and $\E_1 \cap \E_2 = \alpha'( \ker \psi) = \alpha''( \ker \psi)$.

  Therefore $( \E_1 \subset \E \supset \E_2)$ is an object in $\calM^{\subs}_X( S)$ which gives back the given
  objects. This shows that the diagram \eqref{eq:M_cartesian} is cartesian as well.
\end{proof}

Now let $C$ be a smooth projective curve of genus $g$ over the base field $k$. Then the stack $\Nil_{C, n}$
will turn out to be smooth of the expected dimension. In the very similar case of Higgs fields instead of
endomorphisms, Laumon has already proved this in \cite[Corollaire 2.10]{laumon:88}, using more local arguments.
\begin{corollary} \label{cor:moduli}
  The stack $\Nil_{C, n}$ is smooth over $k$. Its dimension at the $K$-valued point given by a coherent sheaf
  $E$ over $C_K$ and $\varphi \in \End( E)$ with $\varphi^n = 0$ is
  \begin{equation*}
    \dim_{(E, \varphi)} \Nil_{C, n} = (g-1) \sum_{i=1}^n r_i^2 ,
  \end{equation*}
  where $r_i$ denotes the rank of $\im( \varphi^{i-1})/\im( \varphi^i)$ over the generic point of $C_K$.
\end{corollary}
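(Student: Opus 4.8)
The plan is to prove both claims---smoothness over $k$ and the dimension formula---simultaneously by induction on $n$, using Theorem \ref{thm:Nil_X} as the engine. The base case $n=0$ is $\Nil_X^0 = \Spec(k)$, which is trivially smooth of dimension zero, matching the empty sum. For the inductive step I would analyze the morphism $\pr_n : \Nil_C^{n+1} \to \Nil_C^n$, which Theorem \ref{thm:Nil_X} exhibits as a composition of pullbacks of the two basic morphisms $\prsub : \Minto_C \to \Coh_C$ and $\prext : \Mintointo_C \to \Mto_C$. The strategy is: first establish that $\Coh_C$ itself is smooth, then show that each of $\prsub$ and $\prext$ is smooth with controllable relative dimension, conclude by the cartesian diagrams that $\pr_n$ is smooth of computable relative dimension, and finally add up the fiber dimensions along $\Nil_C^{n+1} \to \Nil_C^n \to \cdots \to \Nil_C^0$.

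\textbf{Smoothness and dimension of the building blocks.}
First I would recall that on a smooth projective \emph{curve} $\Coh_C$ is smooth, with dimension $(g-1)r^2$ at a sheaf of generic rank $r$; this is the standard deformation-theoretic computation, since obstructions live in $\Ext^2(E,E)$, which vanishes on a curve, and $\dim_{E}\Coh_C = \dim\Ext^1(E,E) - \dim\Hom(E,E) = -\chi(E,E) = (g-1)r^2$ by Riemann--Roch. The morphism $\prsub : \Minto_C \to \Coh_C$ sending $(E_1 \subset E_2) \mapsto E_1$ has fibers parameterizing extensions $0 \to E_1 \to E_2 \to Q \to 0$; deformation theory for such extension data again has vanishing obstructions on a curve, so $\prsub$ is smooth, and its relative dimension at a point with quotient $Q$ of generic rank $s$ can be read off from the deformation complex as $-\chi(E_1,Q) + \dim_Q\Coh_C$, i.e.\ the count of deforming $Q$ plus the extension class. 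Similarly $\prext : \Mintointo_C \to \Mto_C$ is smooth with relative dimension governed by the deformation of the intermediate subsheaf inside the given datum. The key point throughout is that every relevant $\Ext^2$ vanishes because $C$ is a curve, which is exactly why smoothness holds; this is the same mechanism behind Laumon's local argument in \cite[Corollaire 2.10]{laumon:88}.

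\textbf{Assembling the fiber-dimension count.}
Once each building block is smooth with its relative dimension expressed via Euler characteristics, the cartesian diagrams \eqref{eq:Nil_cartesian} and \eqref{eq:M_cartesian} give that $\pr_n$ is smooth, with relative dimension at $(E,\varphi)$ equal to the sum of the relative dimensions of the pulled-back morphisms. Writing $F_i := \im(\varphi^{i-1})/\im(\varphi^i)$ for the successive quotients, with generic rank $r_i$, I expect the contribution of the step from $\Nil_C^{i}$ to $\Nil_C^{i+1}$ to simplify---after the Euler-characteristic bookkeeping and cancellation of the $\Hom$ and $\Ext^1$ terms via Riemann--Roch on $C$---to exactly $(g-1)r_i^2$ plus cross terms that telescope. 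Summing over the chain $\Nil_C^{n+1} \to \cdots \to \Nil_C^0$ then yields $\dim_{(E,\varphi)}\Nil_C^{n} = (g-1)\sum_{i=1}^n r_i^2$.

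\textbf{The main obstacle.}
The hard part will be the bookkeeping in the last paragraph: correctly tracking which subquotient $F_i = \im(\varphi^{i-1})/\im(\varphi^i)$ is being deformed at each stage of the iterated pullback, and verifying that the cross terms $\chi(F_i,F_j)$ arising from the extension data cancel so that only the diagonal terms $(g-1)r_i^2$ survive. Concretely, at the $i$-th step the fibered structure mixes a $\prext$ (which records how $\im\varphi^{i}$ sits inside $\im\varphi^{i-1}$) with a $\prsub$ (which records the kernel datum), and I must check that the Euler-characteristic contributions of these two, summed over all steps, produce precisely the sum of squares rather than a full $\chi(\bigoplus F_i, \bigoplus F_j)$ with off-diagonal terms. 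The cleanest way to control this is to identify, at a point $(E,\varphi)$, the total tangent-minus-obstruction count of $\Nil_C^n$ intrinsically as $-\chi\bigl(\End_{\varphi\text{-compat}}\bigr)$ of an appropriate complex of $\varphi$-compatible endomorphisms, and to show this complex has associated graded $\bigoplus_i \underline{\End}(F_i)$ so that its Euler characteristic is $\sum_i (-\chi(F_i,F_i)) = (g-1)\sum_i r_i^2$; the iterated cartesian description then just confirms smoothness, while this intrinsic computation pins down the dimension.
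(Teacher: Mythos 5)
You follow the paper's skeleton exactly---induction on $n$, smoothness of $\prsub$ and $\prext$ from deformation theory on a curve (vanishing of the relevant $\Ext^2$), and the two cartesian squares of Theorem \ref{thm:Nil_X} to transport relative dimensions to $\pr_n$---but the step you defer as ``the main obstacle'' is precisely where the content lies, and your forecast of it is off in a telling way: no off-diagonal terms $\chi(F_i,F_j)$ between the successive quotients ever arise, so there is nothing to cancel or telescope. With the paper's convention $\chi(E_2,E_1) := \dim\Hom(E_2,E_1) - \dim\Ext^1(E_2,E_1)$, the building blocks are smooth of relative dimension $-\chi(\tfrac{E_1}{E_0},E_1)$ for $\prsub$ at $(E_0 \subset E_1)$ and $-\chi(\tfrac{E}{E_3},E_1)$ for $\prext$ at $(E_1 \subset E_3 \subset E)$ (note the order of arguments: extension classes live in $\Ext^1(\text{quotient},\text{sub})$, so your $-\chi(E_1,Q)$ has them backwards, which matters since $\chi$ is not symmetric). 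Diagram \eqref{eq:M_cartesian} then gives that $\prto$ at $(E_1 \subset E \supset E_2)$ is smooth of relative dimension $-\chi(\tfrac{E_1}{E_1\cap E_2},E_1) - \chi(\tfrac{E}{E_1+E_2},E_1) = -\chi(\tfrac{E}{E_2},E_1)$ by additivity of $\chi$ in the first argument, and diagram \eqref{eq:Nil_cartesian} gives that $\pr_n$ at $(E,\varphi)$ is smooth of relative dimension $-\chi(\tfrac{E}{\im\varphi},\ker\varphi)$. The single observation you are missing is that $\ker\varphi$ and $\coker\varphi = E/\im\varphi$ have the same rank and degree, so Riemann--Roch on the curve turns this one mixed pairing into a diagonal one: $-\chi(\tfrac{E}{\im\varphi},\ker\varphi) = -\chi(\tfrac{E}{\im\varphi},\tfrac{E}{\im\varphi}) = (g-1)r_1^2$. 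Induction then closes the argument, because the image point $(\im\varphi,\varphi|_{\im\varphi})$ of $\pr_n$ has successive quotients of ranks $r_2,\dots,r_{n+1}$; the contributions simply add, one square per step.

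Your fallback ``intrinsic'' computation is the genuinely problematic part, not a safety net. The stack $\Nil_{C}^{n}$ is not the naive stack of pairs $(\E,\varphi)$ with $\varphi^n = 0$: its definition imposes flatness of all $\coker(\varphi^i)$ in families, and this constraint is exactly what makes the stack smooth (without it one would get nilpotent-cone-type singularities). Consequently the tangent/obstruction theory at $(E,\varphi)$ is not computed by any complex one can write down from $(E,\varphi)$ alone without analyzing which deformations preserve this stratification---and that analysis is what Theorem \ref{thm:Nil_X} packages. Your claim that such a complex of ``$\varphi$-compatible endomorphisms'' exists with associated graded $\bigoplus_i \underline{\End}(F_i)$ is therefore not a shortcut but an unsupported assertion calibrated to produce the desired answer; justifying it would amount to redoing the computation above. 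Drop that paragraph and carry out the Euler-characteristic bookkeeping through the cartesian squares: once you see that only $r_1$ enters at each inductive step, the feared cross-term problem evaporates.
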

\begin{proof}
  We argue by induction over $n$, using the natural $1$-morphisms that appear in Theorem \ref{thm:Nil_X}.
  Given coherent sheaves $E_1$ and $E_2$ over $C_K$, we put
  \begin{equation*}
    \chi( E_2, E_1) := \dim_K \Hom( E_2, E_1) - \dim_K \Ext^1( E_2, E_1) \in \bbZ.
  \end{equation*}

  The natural $1$-morphism $\prsub$ is smooth of relative dimension
  \begin{equation*}
    - \chi( \tfrac{E_1}{E_0}, \tfrac{E_1}{E_0}) - \chi( \tfrac{E_1}{E_0}, E_0)
      = - \chi( \tfrac{E_1}{E_0}, E_1)
  \end{equation*}
  at any $K$-valued point $(E_0 \subset E_1)$, according to Proposition A.3 in \cite{hoffmann:10b}.

  The natural $1$-morphism $\prext$ is smooth of relative dimension
  \begin{equation*}
    - \chi( \tfrac{E}{E_1}, E_1) + \chi( \tfrac{E_3}{E_1}, E_1)
      = - \chi( \tfrac{E}{E_3}, E_1)
  \end{equation*}
  at any $K$-valued point $(E_1 \subset E_3 \subset E)$, according to Lemma 3.8 in \cite{bh:08}.

  Hence the natural $1$-morphism $\prto$ is smooth of relative dimension
  \begin{equation*}
    - \chi( \tfrac{E_1}{E_1 \cap E_2}, E_1) - \chi( \tfrac{E}{E_1 + E_2}, E_1)
      = -\chi( \tfrac{E}{E_2}, E_1)
  \end{equation*}
  at any $K$-valued point $( E_1 \subset E \supset E_2)$, due to the cartesian square \eqref{eq:M_cartesian}.

  Therefore the natural $1$-morphism $\pr_n$ is smooth of relative dimension
  \begin{equation*}
    - \chi( \tfrac{E}{\im \varphi}, \ker \varphi) = -\chi( \tfrac{E}{\im \varphi}, \tfrac{E}{\im \varphi})
      = ( g-1) r_1^2
  \end{equation*}
  at the $K$-valued point $( E, \varphi)$, due to the cartesian square \eqref{eq:Nil_cartesian}.
\end{proof}

\begin{corollary} \label{cor:r_i^2}
  Let $E$ be an indecomposable vector bundle over $C_K$ for an algebraically closed field $K \supseteq k$.
  If $r_i$ denotes the generic rank of $\im( \varphi^{i-1})/\im( \varphi^i)$ for a general element $\varphi$
  of the Jacobson radical $\frakj( E)$ in $\End( E)$, then
  \begin{equation*}
    \trdeg_k k( E) \leq 1 + (g-1) \sum_i r_i^2.
  \end{equation*}
\end{corollary}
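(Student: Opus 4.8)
The goal is to bound $\trdeg_k k(E)$ above by $1 + (g-1)\sum_i r_i^2$ for an indecomposable vector bundle $E$ over $C_K$ with $K$ algebraically closed. The natural strategy is to realize the field of moduli $k(E)$ as the residue field of the point $[E]$ in an appropriate moduli stack, and then bound its transcendence degree by the local dimension of that stack at a suitably chosen point. The stack to use is $\Nil_{C,n}$ from Corollary~\ref{cor:moduli}, where a generic nilpotent endomorphism $\varphi \in \frakj(E)$ supplies the extra datum that lets us see $E$ as a point of $\Nil_{C,n}$ rather than merely of $\Coh_C$.

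Concretely, I would proceed as follows. First, fix a general element $\varphi$ of the Jacobson radical $\frakj(E) \subseteq \End(E)$; since $\varphi$ is nilpotent, $\varphi^n = 0$ for some $n$, so the pair $(E,\varphi)$ defines a $K$-valued point of $\Nil_{C,n}$, and by hypothesis the $r_i$ are the generic ranks of the successive quotients $\im(\varphi^{i-1})/\im(\varphi^i)$. The key observation is that the field of moduli of the bundle $E$ is contained in (or is a finite extension of, which does not affect transcendence degree) the residue field of the point $(E,\varphi)$ in $\Nil_{C,n}$, because remembering the endomorphism only adds data and hence can only enlarge the automorphism group's effect in the opposite direction — more precisely, any field over which $(E,\varphi)$ descends is in particular a field over which $E$ descends. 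Second, the residue field of a point on a stack locally of finite type over $k$ has transcendence degree over $k$ bounded by the local dimension of the stack at that point: $\trdeg_k k(E) \le \dim_{(E,\varphi)} \Nil_{C,n}$. Third, I would invoke Corollary~\ref{cor:moduli} directly to evaluate this local dimension as $(g-1)\sum_{i=1}^n r_i^2$.

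This accounts for the $(g-1)\sum_i r_i^2$ term but not the extra $+1$; the $+1$ must come from a comparison between the field of moduli of $E$ and the residue field of $(E,\varphi)$ in $\Nil_{C,n}$. The point is that $\varphi$ is only well-defined up to the action of $\Aut(E)$ and up to scaling, so the choice of a \emph{general} $\varphi$ in $\frakj(E)$ involves at most one additional transcendental parameter (a one-dimensional family of choices for $\varphi$ modulo the relevant symmetries). Thus remembering $\varphi$ inflates the transcendence degree of the relevant residue field by at most one compared to that of $k(E)$ itself, giving $\trdeg_k k(E) \le \dim_{(E,\varphi)} \Nil_{C,n} - (\text{something}) $ rearranged to the stated inequality; equivalently, one bounds the dimension of the $\varphi$-direction by $1$.

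\textbf{Main obstacle.} The delicate step is precisely controlling this $+1$: making rigorous the passage from the residue field of $(E,\varphi)$ in $\Nil_{C,n}$ back to the field of moduli $k(E)$ of the underlying bundle, and verifying that the extra datum $\varphi$ costs exactly one unit of transcendence degree rather than more. This requires understanding the dimension of the space of ``generic'' nilpotent endomorphisms of $E$ modulo automorphisms — i.e.\ that the general $\varphi \in \frakj(E)$ sweeps out at most a one-parameter family over $k(E)$ after accounting for the $\Aut(E)$-action and scaling. I would expect this to rely on the structure of $\frakj(E)$ as established in Section~\ref{sec:End} (where $\End(E)/\frakj(E)$ is a division algebra, indeed a field when working over the algebraically closed $K$), so that a general element of $\frakj(E)$ has a well-controlled orbit. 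Everything else — the descent comparison and the dimension count — follows formally from Corollary~\ref{cor:moduli} and the general principle bounding residue-field transcendence degree by local stack dimension.
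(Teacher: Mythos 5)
Your overall strategy uses the right objects---the stack $\Nil_{C,n}$, its dimension formula from Corollary \ref{cor:moduli}, and the forgetful morphism to $\Coh_C$---and this is indeed the paper's framework, but both dimension-counting principles your argument rests on are false, so there is a genuine gap. First, for an \emph{Artin} stack the transcendence degree of the residue field of a point is \emph{not} bounded by the local dimension of the stack at that point: the correct relation is $\trdeg_k k(\xi) = \dim_k \overline{\{\xi\}} + \dim \Aut(\xi) \leq \dim_{\xi}\calM + \dim\Aut(\xi)$, because stack dimension counts automorphisms negatively. (This already fails inside $\Coh_C$ in the way that matters here: a generic stable bundle of rank $r$ has $\trdeg_k k(E) = (g-1)r^2+1$, while the local dimension of $\Coh_C$ at that point is only $(g-1)r^2$.) Applied to the point $(E,\varphi)$ of $\Nil_{C,n}$, your second step therefore really gives $\trdeg_k k(E) \leq \trdeg_k k(E,\varphi) \leq (g-1)\sum_i r_i^2 + \dim_K \Aut(E,\varphi)$, where $\Aut(E,\varphi)$ is the centralizer of $\varphi$ in $\End(E)$, an uncontrolled term. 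Second, your proposed repair of the $+1$---that a general $\varphi$ costs at most one extra parameter modulo $\Aut(E)$ and scaling---is also false. By orbit--stabilizer, the number of moduli of $\varphi \in \frakj(E)$ under $\Aut(E)$-conjugation equals $\dim_K\Aut(E,\varphi)-1$ (using $\dim_K\End(E)=\dim_K\frakj(E)+1$), and this can be arbitrarily large: for the unique nonsplit extension $E$ of $\calO_C$ by the canonical bundle $\Omega_C$, the radical $\frakj(E)\cong \rmH^0(\Omega_C)\cong K^g$ is a square-zero ideal on which conjugation acts trivially, so a general $\varphi$ has $g$ moduli and $\dim_K\Aut(E,\varphi)=g+1$. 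Thus neither the ``one extra parameter'' claim nor the centralizer bound $\dim_K\Aut(E,\varphi)\le 1$ that your route would require can hold.

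The paper closes exactly this gap by working with the whole family instead of a single pair. Let $\calC\subseteq\Coh_C$ be the closure of the point given by $E$, and let $\calN\subseteq\Nil_{C,n}$ be the closure of the locus of \emph{all} pairs $(E,\varphi)$ with $\varphi\in\frakj(E)$ having rank profile $(r_i)$; here one uses that the nilpotent elements of $\End(E)$ are exactly $\frakj(E)$, since $\End(E)/\frakj(E)\cong K$ by indecomposability and $K=\bar{K}$. Then: (a) $\dim_k\calC=\trdeg_k k(E)-\dim_K\End(E)$, the automorphism-corrected dimension formula; (b) the fiber of $\calN\to\calC$ over the dense point $E$ contains a dense open subscheme of $\frakj(E)$, so $\dim_k\calN\geq\dim_k\calC+\dim_K\frakj(E)=\trdeg_k k(E)-1$; (c) $\dim_k\calN\leq(g-1)\sum_i r_i^2$ by Corollary \ref{cor:moduli}. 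The $+1$ thus comes from $\dim_K\End(E)-\dim_K\frakj(E)=1$, i.e.\ from indecomposability over the algebraically closed field, not from any bound on the moduli of $\varphi$; in the example above, the fiber dimension $g$ in (b) exactly offsets the automorphism excess that blocks your single-point argument. To repair your proof you would have to replace the single point $(E,\varphi)$ by this $\frakj(E)$-parameter family and use the fiber-dimension inequality (b) together with the corrected formula (a)---which is precisely the paper's argument.
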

\begin{proof}
  Since $E$ is indecomposable, Lemma 6 in \cite{atiyah:56} implies that $\End( E)/\frakj( E) \cong K$.

  Let $\calC \subseteq \Coh_X$ be the closure of the point given by $E$. It satisfies
  \begin{equation*}
    \dim_k \calC = \trdeg_k k( E) - \dim_K \End( E).
  \end{equation*}
  Choose $n \in \bbN$ with $\frakj( E)^n = 0$. Let $\calN \subseteq \Nil_{X, n}$ be
  the closure of all points $(E, \varphi)$ with $\varphi \in \frakj( E)$ such that
  each $\im( \varphi^{i-1})/\im( \varphi^i)$ has generic rank $r_i$. It satisfies
  \begin{equation*}
    \dim_k \calN \leq (g-1)( r_1^2 + \cdots + r_n^2)
  \end{equation*}
  due to Corollary \ref{cor:moduli}. The fiber of the forgetful $1$-morphism $\calN
\to\calC$ over the dense
  point $E: \Spec( K)\to \calC$ contains a dense open subscheme of $\frakj( E)$, so
  \begin{equation*}
    \dim_k \calN \geq \dim_k \calC + \dim_K \frakj( E) = \trdeg_k k( E) - 1. \qedhere
  \end{equation*}
\end{proof}
\begin{corollary} \label{cor:r^2-r}
  Let $E$ be a vector bundle of rank $r$ over $C_K$ for a field $K \supseteq k$. 
  If $E$ is not simple, and the curve $C$ has genus $g \geq 2$, then
  \begin{equation*}
    \trdeg_k k( E) \leq (g-1)(r^2 - r) + 2.
  \end{equation*}
\end{corollary}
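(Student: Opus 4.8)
The plan is to reduce to an algebraically closed base field, decompose $E$ into indecomposables, and then treat the indecomposable and the properly decomposable cases separately, feeding both into Corollary \ref{cor:r_i^2}.

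First I would replace $K$ by an algebraic closure $\bar K$. As $\Spec \bar K \to \Spec K$ is surjective, the sheaves $E$ and $E \otimes_K \bar K$ determine the same point of $\Coh_C$, hence the same residue gerbe and the same field of moduli, so $\trdeg_k k(E)$ is unchanged. Since $\End(E \otimes_K \bar K) \cong \End(E) \otimes_K \bar K$ has the same dimension, $E \otimes_K \bar K$ is again non-simple of rank $r$, and note $r \ge 2$ because every line bundle is simple. So I may assume $K = \bar K$. Lemma \ref{lem:decompose} then provides a Krull--Schmidt decomposition $E \cong \bigoplus_{j=1}^t F_j^{\oplus m_j}$ into pairwise non-isomorphic indecomposables $F_j$ of rank $\rho_j$, with $r = \sum_j m_j \rho_j$.

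Suppose first $E$ is indecomposable, i.e.\ $t = m_1 = 1$. Being non-simple, $E$ has $\frakj(E) \neq 0$, so a general $\varphi \in \frakj(E)$ is a nonzero nilpotent endomorphism. It is nonzero at the generic point of $C_K$, whence $\rank \im \varphi \ge 1$, and nilpotence forces $\varphi$ to be non-surjective there; in the notation of Corollary \ref{cor:r_i^2} this reads $1 \le r_1 \le r-1$, while $\sum_i r_i = r$. Bounding the tail by a single part and using convexity of $x \mapsto x^2$ gives $\sum_i r_i^2 \le r_1^2 + (r-r_1)^2 \le 1 + (r-1)^2 = r^2 - 2r + 2$. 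Corollary \ref{cor:r_i^2} now gives $\trdeg_k k(E) \le 1 + (g-1)(r^2 - 2r + 2)$, which is $\le (g-1)(r^2 - r) + 2$ since $(g-1)(2-r) \le 0$ for $r \ge 2$.

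Now suppose $E$ is properly decomposable, so $t \ge 2$ or $m_1 \ge 2$; this is the case I expect to be the main obstacle. The direct-sum morphism $\prod_{j=1}^t \Coh_C \to \Coh_C$, $(G_j)_j \mapsto \bigoplus_j G_j^{\oplus m_j}$, carries the point $(F_j)_j$ to $[E]$; because the residue field of an image point embeds into that of its source, and the residue field of $(F_j)_j$ lies in the compositum of the $k(F_j)$, I obtain $\trdeg_k k(E) \le \sum_j \trdeg_k k(F_j)$ --- making this reduction precise is the crux. Each $F_j$ is indecomposable over the algebraically closed $K$, so Corollary \ref{cor:r_i^2} with $\sum_i r_i^2 \le \rho_j^2$ gives $\trdeg_k k(F_j) \le 1 + (g-1)\rho_j^2$. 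The remaining elementary inequality $\sum_j \bigl(1 + (g-1)\rho_j^2\bigr) \le (g-1)(r^2 - r) + 2$ follows from $r \ge \sum_j \rho_j$ (equality iff all $m_j = 1$) together with $2\sum_{j<j'}\rho_j\rho_{j'} - \sum_j \rho_j \ge t-2$ for $t \ge 2$; the extremal case $t = 2$, $\rho_1 = \rho_2 = m_1 = m_2 = 1$ gives equality, both sides being $2g$.
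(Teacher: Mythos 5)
Your proof is correct and is essentially the paper's own argument: reduce to algebraically closed $K$, decompose $E$ into indecomposables, bound each $\trdeg_k k(F_j)$ via Corollary \ref{cor:r_i^2}, use subadditivity of the transcendence degree of the field of moduli over direct sums (which the paper, like you, asserts via the direct-sum construction without a detailed residue-field argument), and finish with elementary arithmetic. The paper merges your two cases into one by observing that non-simplicity forces the combined list of ranks $r_{ij}$ (over all indecomposable summands) to have at least two entries and then quoting Lemma \ref{lem:arithm}; your case split works equally well, except that your closing inequality as stated only covers $t \geq 2$, so the subcase $E \cong F_1^{\oplus m_1}$ with $m_1 \geq 2$ needs the trivial extra observation $\rho_1 \leq r/2$, whence $1 + (g-1)\rho_1^2 \leq (g-1)(r^2-r)+2$.
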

\begin{proof}
  Since $k(E) = k(E \otimes_K L)$ for any field $L \supseteq K$, we may assume that $K$ is algebraically closed.
  We can express $E$ as a direct sum of some indecomposable vector bundles $E_j$ of rank $r_j \geq 1$
  over $C_K$. Corollary \ref{cor:r_i^2} states
  \begin{equation*}
    \trdeg_k k( E_j) \leq 1 + (g-1) \sum_i r_{ij}^2
  \end{equation*}
  for some integers $r_{ij} \geq 1$ with $\sum_i r_{ij} = r_j$. From this we conclude that
  \begin{equation*}
    \trdeg_k k( E) \leq \sum_j \trdeg_k k( E_j) \leq \sum_{j} 1 + (g-1) \sum_{i, j} r_{ij}^2.      
  \end{equation*}
  Because $E$ is not simple, the sum $\sum_{i, j} r_{ij} = r$ has at least two summands. Hence
  \begin{equation*}
    \trdeg_k k( E) \leq r + (g-1)(r^2 - 2r + 2) = (g-1)(r^2 - r) + 2 - (g-2)(r-2)
  \end{equation*}
  due to Lemma \ref{lem:arithm} below. Since $g \geq 2$ and $r \geq 2$, the result follows.
\end{proof}

\begin{lemma} \label{lem:arithm}
If $r_1, \ldots, r_n$, $n \geq 2$, are positive integers with $r_1 + \cdots + r_n = r$, then
  \begin{equation*}
    r_1^2 + \cdots + r_n^2 \leq r^2 - 2r + 2.
  \end{equation*}
\end{lemma}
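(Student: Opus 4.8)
The plan is to recognize this as the statement that, among all decompositions of $r$ into $n \ge 2$ positive integer parts, the sum of squares $r_1^2 + \cdots + r_n^2$ is maximized by the most unbalanced decomposition, namely one part equal to $r-1$ and one part equal to $1$ (which forces $n = 2$). In that extremal case the sum of squares equals $(r-1)^2 + 1 = r^2 - 2r + 2$, which is exactly the asserted upper bound; so the content of the lemma is that no other decomposition does better. The mechanism behind this is the elementary observation that merging two positive parts strictly increases the sum of squares, since $(a+b)^2 = a^2 + b^2 + 2ab > a^2 + b^2$ for positive $a,b$; thus spreading the total over more parts, or over more balanced parts, can only decrease $\sum r_i^2$.

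To turn this into a short self-contained argument I would substitute $r_i = 1 + s_i$ with integers $s_i \ge 0$. The constraint becomes $\sum_i s_i = r - n$, and expanding gives $\sum_i r_i^2 = \sum_i (1 + 2 s_i + s_i^2) = 2r - n + \sum_i s_i^2$. Because the $s_i$ are nonnegative, one has the crude but sufficient bound $\sum_i s_i^2 \le \big( \sum_i s_i \big)^2 = (r-n)^2$, whence $\sum_i r_i^2 \le 2r - n + (r-n)^2$. It then remains to check the purely numerical inequality $2r - n + (r-n)^2 \le r^2 - 2r + 2$.

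This last step is a routine calculation: expanding and collecting terms, the inequality is equivalent to $(n-2)(2r - 1 - n) \ge 0$. Here the first factor is $\ge 0$ because $n \ge 2$, and the second is $\ge 0$ because the $n$ positive parts summing to $r$ force $n \le r$, so that $2r - 1 - n \ge r - 1 \ge 1$. Equality throughout forces $n = 2$ and $\sum_i s_i^2 = (\sum_i s_i)^2$, i.e. at most one $s_i$ nonzero, recovering exactly the extremal configuration $\{r-1, 1\}$. There is no real obstacle here; the only point requiring care is to use the inequality in the correct direction (bounding $\sum s_i^2$ from above by the square of the sum, which is where nonnegativity of the $s_i$ is essential). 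An equally clean alternative, if one prefers to avoid the explicit factorization, is induction on $n$: the base case $n=2$ follows from convexity of $r_1 \mapsto r_1^2 + (r-r_1)^2$ on $[1, r-1]$, and the inductive step merges the last two parts into $r_{n-1} + r_n$, using $(r_{n-1}+r_n)^2 \ge r_{n-1}^2 + r_n^2$ to reduce to a decomposition with $n-1 \ge 2$ parts of the same total $r$.
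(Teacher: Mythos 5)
Your proposal is correct. Both your main argument and the paper's two-line proof rest on the same superadditivity inequality $x_1^2 + \cdots + x_m^2 \le (x_1 + \cdots + x_m)^2$ for nonnegative numbers, but you deploy it differently. The paper applies it to $r_2, \ldots, r_n$, thereby merging all but the first part and reducing the lemma to the case $n = 2$; there it finishes with the identity $r_1^2 + r_2^2 = r^2 - 2r_1 r_2$ together with $r_1 r_2 \ge r - 1$ (equivalently $(r_1 - 1)(r_2 - 1) \ge 0$). You instead shift every part by one, writing $r_i = 1 + s_i$ with $s_i \ge 0$, apply superadditivity to the $s_i$, and then verify the purely numerical inequality $2r - n + (r-n)^2 \le r^2 - 2r + 2$, which indeed factors as $(n-2)(2r - 1 - n) \ge 0$ and holds because $2 \le n \le r$. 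Your route is slightly longer, but it keeps all $n$ parts in play until the end, which lets you read off the equality analysis (forcing $n = 2$ and parts $\{r-1, 1\}$) directly; note also that your substitution $s_i = r_i - 1$ is the same trick that is hidden in the paper's bound $r_1 r_2 \ge r - 1$. Finally, your alternative suggestion --- induction on $n$, merging the last two parts at each step --- is essentially the paper's reduction performed one part at a time, with the base case settled by convexity rather than by the product bound.
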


\begin{proof}
  Since $r_2^2 + \cdots + r_n^2 \leq (r_2 + \cdots + r_n)^2$, it suffices to treat the case $n = 2$.
  In this case, the claim follows from $r_1^2 + r_2^2 = r^2 - 2 r_1 r_2$ and $r_1 r_2 \geq r-1$.
\end{proof}

\section{Essential Dimension of Vector Bundles over a Curve} \label{sec:bundles}
Let $C$ be a smooth projective irreducible curve of genus $g$ over the field $k$.
Assume that $C$ has a $k$-rational point. We consider the irreducible open substack
\begin{equation*}
  \Bun_{C, r, d} \subseteq \Coh_C
\end{equation*}
that parameterizes vector bundles of rank $r \geq 1$ and degree $d \in \bbZ$ over $C$. Let
\begin{equation*}
  \calG_{C, r, d}
\end{equation*}
denote the residue gerbe of the generic point on $\Bun_{C, r, d}$.
\begin{proposition}
  If the curve $C$ has genus $g = 0$, then
  \begin{equation*}
    \ed_k( \calG_{C, r, d}) = \ed_k( \Bun_{C, r, d}) = 0.
  \end{equation*}
\end{proposition}
\begin{proof}
  The assumptions imply $C \cong \bbP^1$. Let $E$ be a vector bundle over $\bbP^1_K$
  for some field $K \supseteq k$. Grothendieck's splitting theorem states
  \begin{equation*}
    E \cong \calO_{\bbP^1_K}( n_1) \oplus \cdots \oplus \calO_{\bbP^1_K}( n_r)
  \end{equation*}
  for some $n_1, \ldots, n_r \in \bbZ$. Therefore $E$ is already defined over $k$, so $\ed_k(E) = 0$.
\end{proof}
\begin{proposition} \label{prop:g=1}
  If the curve $C$ has genus $g = 1$, then
  \begin{equation*}
    \ed_k( \calG_{C, r, d}) = \gcd( r, d) \qquad\text{and}\qquad \ed_k( \Bun_{C, r, d}) = r.
  \end{equation*}
\end{proposition}
\begin{proof}
  Let $E$ be a vector bundle of rank $r$ over $C_K$ for some field $K \supseteq k$. We have
  \begin{equation*}
    \ed_k( E) = \trdeg_k k( E)
  \end{equation*}
  due to Corollary \ref{cor:elliptic}. Since $k(E) = k(E \otimes_K L)$ for any field $L \supseteq K$,
  we may assume without loss of generality that $K$ is algebraically closed.

  Suppose that $E$ is generic, or in other words that $E$ is an object of $\calG_{C, r, d}( K)$. Then $E$
  is semistable. Taking $K$ as base field, the image $T( E)$ under the Fourier-Mukai equivalence $T$
  in Theorem \ref{thm:FM} is a generic coherent torsion sheaf of length $\gcd( r, d)$ over $C_K$.
  This shows $\dim_K \End( E) = \gcd( r, d)$, and hence
  \begin{equation*}
    \ed_k( E) = \trdeg_k k( E) = \dim_k( \Bun_{C, r, d}) + \dim_K \Aut( E) = \gcd( r, d),
  \end{equation*}
  in this case. Consequently, we have $\ed_k( \calG_{C, r, d}) = \gcd( r, d)$. 

  In general, we can express $E$ as a direct sum of some indecomposable vector bundles $E_i$ over $C_K$.
  Corollary \ref{cor:r_i^2} states
  \begin{equation*}
    \trdeg_k k( E_i) \leq 1
  \end{equation*}
  for all $i$. This implies that $\trdeg_k k( E) \leq r$, and hence $\ed_k( E) \leq r$.

  We construct a vector bundle $E$ of given rank $r$ and degree $d$ with $\ed_k( E) = r$ as follows.
  Choose integers $d_1 < \ldots < d_r$ such that $d_1 + \cdots + d_r = d$. Let
  \begin{equation*}
    K = k \big( \prod_i \Pic^{d_i}( C) \big)
  \end{equation*}
  denote the function field of the product of the Picard varieties $\Pic^{d_i}( C)$. We choose a Poincar\'{e}
  bundle over $C \times_k \Pic^{d_i}( C)$ and denote its pullback to $C_K$ by $L_i$. Then
  \begin{equation*}
    E := L_1 \oplus \cdots \oplus L_r
  \end{equation*}
  is a vector bundle of rank $r$ and degree $d$ over $C_K$. We claim that $k( E) = K$.

  Let $S$ be an affine scheme over $K \otimes_k K$. Let $p, q: S \to \Spec( K)$ denote the two projections.
  Let an isomorphism of vector bundles over $C_S$
  \begin{equation*}
    \varphi: p^* E \longrightarrow q^* E
  \end{equation*}
  be given. If $i < j$, then $\deg( L_i) = d_i < d_j = \deg( L_j)$, and therefore
  \begin{equation*}
    \Hom( p^* L_i, q^* L_j) = 0.
  \end{equation*}
  This implies $\varphi( p^* E_i) \subseteq q^* E_i$ for the subbundle
  \begin{equation*}
    E_i := L_1 \oplus \cdots \oplus L_i \subseteq E.
  \end{equation*}
  Consequently, $\varphi$ induces for each index $i$ a morphism
  \begin{equation*}
    \varphi_i: p^* E_i/p^* E_{i-1} \longrightarrow q^* E_i/q^* E_{i-1}.
  \end{equation*}
  As $\varphi$ is an isomorphism, each $\varphi_i$ is an isomorphism. Using $E_i/E_{i-1} \cong L_i$, we get
  \begin{equation*}
    p^* L_i \cong q^* L_i
  \end{equation*}
  for each $i$. By construction of $K$, this implies $p = q$.
  In other words, the projection $S \to \Spec( K \otimes_k K)$ factors through the diagonal embedding
  \begin{equation*}
    \Spec( K) \hookrightarrow \Spec( K \otimes_k K).
  \end{equation*}

  Now let $\calG$ be the residue gerbe of the point in $\Bun_{C, r, d}$ given by $E$.
  This gerbe is given by the groupoid $[ I \rightrightarrows \Spec( K)]$, where
  \begin{equation*}
    I := \Spec( K) \times_{\Bun_{C, r, d}} \Spec( K)
  \end{equation*}
  parameterizes isomorphisms between pullbacks of $E$. We have just seen that the projection
  $I \to \Spec( K \otimes_k K)$ factors through the diagonal. Therefore, the identity on $\Spec( K)$
  descends to a morphism $\calG \to \Spec( K)$. This shows
  \begin{equation*}
    k( E) = K,
  \end{equation*}
  as claimed. Thus we obtain
  \begin{equation*}
    \trdeg_k k( E) = \dim_k \big( \prod_{i=1}^r \Pic^{d_i}( C) \big) = r
  \end{equation*}
  and hence $\ed_k( E) = r$ in this case. This shows that $\ed_k( \Bun_{C, r, d}) = r$.
\end{proof}
\begin{theorem} \label{thm:ed_vb}
  If the curve $C$ has genus $g \geq 2$, then
  \begin{equation*}
    \ed_k( \calG_{C, r, d}) = \ed_k( \Bun_{C, r, d}) \leq (g-1)r^2 + 1 + \sum_{p|h} ( p^{v_p( h)} - 1)
  \end{equation*}
  for $h := \gcd( r, d)$. One has equality here if Conjecture \ref{conj:CKM} holds.
\end{theorem}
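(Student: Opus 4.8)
The plan is to compute $\ed_k(\Bun_{C,r,d}) = \sup_E \ed_k(E)$, the supremum ranging over vector bundles $E$ of rank $r$ and degree $d$ over $C_K$ for all $K \supseteq k$, by controlling the two summands of $\ed_k(E) = \trdeg_k k(E) + \ed_{k(E)}(E)$ and then exhibiting one bundle that attains the bound. I would split according to whether $E$ is \emph{geometrically} simple, i.e.\ whether $E \otimes_K \overline{K}$ is simple. If $E$ is not geometrically simple, then $E \otimes_K \overline{K}$ either has at least two indecomposable summands or carries a nonzero nilpotent endomorphism; the argument proving Corollary \ref{cor:r^2-r} (which uses only this, via Corollary \ref{cor:r_i^2} and Lemma \ref{lem:arithm}) then gives $\trdeg_k k(E) \leq (g-1)(r^2-r)+2$. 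Combined with the crude bound $\ed_{k(E)}(E) \leq r-1$ of Corollary \ref{cor:vb_bound}, this yields $\ed_k(E) \leq (g-1)r^2 - (g-2)r + 1$, which is at most the asserted bound because $g \geq 2$, $r \geq 1$ and $\sum_{p\mid h}(p^{v_p(h)}-1) \geq 0$. So such bundles stay strictly below the bound.

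The geometrically simple case carries the content; here $E$ is simple over $K$ as well. Since $E \otimes_K \overline{K}$ is indecomposable with vanishing radical, the general nilpotent endomorphism is zero, so Corollary \ref{cor:r_i^2} (applied after the harmless reduction to $\overline{K}$, as $k(E) = k(E \otimes_K \overline{K})$) gives $\trdeg_k k(E) \leq 1 + (g-1)r^2$. For the other summand, Example \ref{ex:Gm-gerbe} presents the residue gerbe of $E$ as a $\Gm$-gerbe over $k(E)$ with class $\alpha$ represented by the central simple algebra $A = \End(\pi_* F)$; write $\ind\alpha = \deg D$ for the underlying division algebra $D$. Corollary \ref{cor:ed(G)}, Proposition \ref{prop:morita} and Corollary \ref{cor:ed_D} then combine to give
\begin{equation*}
  \ed_{k(E)}(E) = \ed_{k(E)}(\Mod_{D, 1/\ind\alpha}) \leq \sum_{p \mid \ind\alpha} (p^{v_p(\ind\alpha)} - 1).
\end{equation*}
The decisive point is that $\ind\alpha \mid h$: granting this, the sum is at most $\sum_{p\mid h}(p^{v_p(h)}-1)$, and adding the two summands gives $\ed_k(E) \leq 1 + (g-1)r^2 + \sum_{p\mid h}(p^{v_p(h)}-1)$, exactly the bound. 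To prove $\ind\alpha \mid h$ I would view the gerbe through twisted sheaves: the universal family is an $\alpha$-twisted bundle of rank $r$ on $C_{k(E)}$, whose fibre at the $k$-rational point of $C$ is an $\alpha$-twisted $k(E)$-vector space of dimension $r$, forcing $\ind\alpha \mid r$; its cohomology groups are $\alpha$-twisted vector spaces of dimensions $h^0$ and $h^1$, so $\ind\alpha$ divides each and hence their difference $\chi = d + r(1-g)$. Since $\ind\alpha \mid r$, this gives $\ind\alpha \mid d$, whence $\ind\alpha \mid h = \gcd(r,d)$.

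For the equality under Conjecture \ref{conj:CKM}, I would realise the bound on the residue gerbe $\calG$ at the generic point of the stable locus of $\Bun_{C,r,d}$, whose generic bundle is geometrically stable, hence geometrically simple. Its residue field is the function field of the coarse moduli space of stable bundles, so $\trdeg_k k(\calG) = (g-1)r^2 + 1$, and $\calG$ is a $\Gm$-gerbe with class $\alpha$ of index dividing $h$ by the previous paragraph. It remains to see $\ind\alpha = h$. The order of $\alpha$ divides $h$ (determinants of cohomology of the universal family, twisted by line bundles of varying degree on $C$, realise precisely the weights in $h\bbZ$) and in fact equals $h$, because the image of the weight homomorphism on $\mathrm{Pic}(\Bun_{C,r,d})$ is exactly $h\bbZ$. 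As the period always divides the index and the index divides $h$, we get $h \mid \ind\alpha \mid h$, i.e.\ $\ind\alpha = h$. Conjecture \ref{conj:CKM} now evaluates $\ed_{k(\calG)}(\Mod_{D,1/h}) = \sum_{p\mid h}(p^{v_p(h)}-1)$, and since every object of $\calG$ has field of moduli $k(\calG)$, Corollary \ref{cor:ed(G)} yields
\begin{equation*}
  \ed_k(\Bun_{C,r,d}) \geq \trdeg_k k(\calG) + \ed_{k(\calG)}(\calG) = (g-1)r^2 + 1 + \sum_{p\mid h}(p^{v_p(h)}-1),
\end{equation*}
matching the upper bound.

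The main obstacle is the index computation for $\alpha$. The divisibility $\ind\alpha \mid h$ for every geometrically simple $E$ is what sharpens the bound $\ed_{k(E)}(E) \leq r-1$ into the arithmetic expression $\sum_{p\mid h}(p^{v_p(h)}-1)$; it rests on the twisted-sheaf description of the gerbe together with Riemann--Roch. Establishing $\ind\alpha = h$ for the generic bundle is the harder half: beyond $\ind\alpha \mid h$ it requires the order of the universal obstruction class to be exactly $h$, equivalently the exact image $h\bbZ$ of the weight map on $\mathrm{Pic}(\Bun_{C,r,d})$, which is where the structure of the Picard group of the moduli stack must be invoked.
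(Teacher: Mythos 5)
Your proposal is correct and follows essentially the same route as the paper: the same case split (simple versus non-simple --- your ``geometrically simple'' condition is equivalent, since endomorphism algebras commute with field extension), the same combination of Corollaries \ref{cor:r^2-r}, \ref{cor:vb_bound}, \ref{cor:r_i^2} and \ref{cor:ed(G)} with Proposition \ref{prop:morita} and Corollary \ref{cor:ed_D}, and the same passage to the generic point for sharpness under Conjecture \ref{conj:CKM}. The only deviation is that where the paper quotes the literature for the two index facts --- that $\ind \psi_{\calG}$ divides $h$ (Corollary 3.6 in \cite{hoffmann:10b}) and that $\ind \psi_{\calG} = h$ at the generic point (\cite{dn:89}, \cite{hoffmann:07}, \cite{bbgn:07}) --- you sketch the standard proofs of these facts (twisted sheaves and Riemann--Roch for the divisibility; period--index together with the weight computation on the Picard group of $\Bun_{C, r, d}$ for the equality), which is precisely the content of those citations.
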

\begin{proof}
  Let $E$ be a vector bundle of rank $r$ and degree $d$ over $C_K$ for some field $K \supseteq k$.
  If $E$ is not simple, then Corollary \ref{cor:r^2-r} and Corollary \ref{cor:vb_bound} imply
  \begin{align*}
    \ed_k( E) & = \trdeg_k k( E) + \ed_{k( E)}( E)\\ & \leq (g-1)(r^2 - r) + 2 + (r-1) \leq (g-1)r^2 + 1.
  \end{align*}

  Now suppose that $E$ is simple. Then Corollary \ref{cor:r^2-r} implies that
  \begin{equation} \label{eq:trdeg}
    \trdeg_k k( E) \leq (g-1)r^2 + 1.
  \end{equation}
  Let $\calG$ denote the residue gerbe of the point $E: \Spec( K)\to \Bun_{C, r, d}$. The residue field
  of this point is $k( E)$. Since $E$ is simple, Corollary \ref{cor:ed(G)} implies that
  \begin{equation*}
    \ed_{k( E)}( \calG) = \ed_{k( E)}( \Mod_{A, 1/\deg A})
  \end{equation*}
  for some central simple algebra $A$ over $k( E)$. The index of $A$ divides $h = \gcd( r, d)$, because its
  Brauer class coincides by Example \ref{ex:Gm-gerbe} with the Brauer class $\psi_{\calG}$ of the $\Gm$-gerbe
  $\calG$, and $\ind \psi_{\calG}$ divides $h$ for example by Corollary 3.6 in \cite{hoffmann:10b}. Hence
  \begin{equation} \label{eq:ed}
    \ed_{k( E)}( \calG) \leq \sum_{p|h} ( p^{v_p( h)} - 1)
  \end{equation}
  according to Proposition \ref{prop:morita} and Corollary \ref{cor:ed_D}. This proves the inequality.

  Suppose moreover that $E$ maps to the generic point of $\Bun_{C, r, d}$. Then we have equality in
  \eqref{eq:trdeg}. Assuming Conjecture \ref{conj:CKM}, we also have equality in \eqref{eq:ed}, because
  $\ind \psi_{\calG} = h$ in this situation. For a proof of the latter, see Proposition 5.1 in \cite{dn:89},
  or Corollary 6.6 in \cite{hoffmann:07}, or Theorem 1.8 in \cite{bbgn:07}.
\end{proof}

\end{document}